\newtheorem{thm}{Theorem}[section]
\newtheorem{lemma}[thm]{Lemma}
\newtheorem{prop}[thm]{Proposition}
\theoremstyle{definition}
\newtheorem{note}[thm]{Notes}
\theoremstyle{remark}
\numberwithin{equation}{section}
\newcommand{\C}{{\mathbb C}}
\newcommand{\R}{\mathbb{R}}
\newcommand{\PP}{{\mathbb P}}
\newcommand{\OO}{{\mathscr O}}
\newcommand{\A}{\mathcal{A}}
\newcommand{\EE}{\mathbb{E}}
\newcommand{\Hilb}{\operatorname{Hilb}}
\newcommand{\coh}{\operatorname{Coh}}
\newcommand{\Pic}{\operatorname{Pic}} 
\newcommand{\Ext}{\operatorname{Ext}}
\newcommand{\Hom}{\operatorname{Hom}}
\newcommand{\rhom}{\operatorname{\textbf{R}\textit{Hom}}}
\newcommand{\coker}{\operatorname{coker}}
\newcommand{\tors}{\operatorname{tors}}
\newcommand{\ch}{\operatorname{ch}}
\newcommand{\rk}{\operatorname{rk}}
\newcommand{\M}{{\mathcal M}}
\newcommand{\T}{{\mathbb T}}
\renewcommand{\P}{{\mathscr P}}
\newcommand{\dT}{{\hat{\mathbb T}}}
\newcommand{\xhat}{{\Hat x}}
\newcommand{\yhat}{{\Hat y}}
\newcommand{\I}{{\mathscr I}}
\newcommand{\compo}{\raise2pt\hbox{$\scriptscriptstyle\circ$}}
\renewcommand{\leq}{\leqslant}
\renewcommand{\geq}{\geqslant}
\newcommand{\tof}[1]{\mathbin{\buildrel{#1}\over{\to}}}
\def\rep#1{\bysame}
\begin{document}

\title[Bridgeland Stable Moduli Spaces]{Rank One Bridgeland Stable Moduli Spaces on A
  Principally Polarized Abelian Surface}
\author[]{Antony Maciocia \&\ Ciaran Meachan}
\address{Department of Mathematics and Statistics\\
The University of Edinburgh\\
The King's Buildings\\ Mayfield Road\\ Edinburgh, EH9 3JZ.\\}
\email{A.Maciocia@.ed.ac.uk\\C.P.Meachan@ed.ac.uk}
\thanks{} 
\date{\today}
\subjclass{14F05, 14D20, 14J60, 18E30, 14N35, 14C20}
\keywords{Moduli space, Hilbert scheme, abelian surface, Bridgeland stability, wall
  crossing, projectivity}
\begin{abstract}
We compute moduli spaces of Bridgeland stable objects on an
irreducible principally polarized complex abelian surface $(\T,\ell)$
corresponding to twisted ideal sheaves. We use Fourier-Mukai
techniques to extend the ideas of Arcara and Bertram to express
wall-crossings as Mukai flops and show that the moduli spaces are projective.
\end{abstract}

\maketitle

\section*{Introduction}{}
Let $(\T,\ell)$ be a principally polarized abelian surface over $\mathbb{C}$. We shall
assume that $\Pic\T=\langle\ell\rangle$. We shall also fix a line
bundle $L$ with $c_1(L)=\ell$. Then the linear system $|\ell|$
consists of a unique smooth divisor $D$ given as the zero set of the
unique (up to scale) section of $L$. We can translate $D$ to give a
family of divisors which we shall denote by $D_x=\tau_x D\in
|\tau^*_{-x}\ell|$. As observed in \cite{Mac11}, we can view these
$D_x$ as analogues of lines on the projective plane. They have the
property that any two intersect in exactly 2 points (up to
multiplicity) and any two points (or fat point) lies on exactly two of
them. Given a 0-scheme $X\subset\T$ we will say that $X$ is
\emph{collinear} if there is some $x$ such that $X\subset D_x$. 

Now consider objects of the (bounded) derived
category $D(\T)$ whose Chern characters are $(1,2\ell,4-n)$, for an
integer $n\geq0$. A torsion-free sheaf with such a Chern character
takes the form $L^2\otimes\P_\xhat\otimes\I_X$, where $\P_\xhat$
is the flat line bundle corresponding to some point  $\xhat$ of the
dual torus $\dT$ and $X$ is a 0-scheme of length $n$. We shall drop
the tensor product signs in what follows. The Gieseker moduli space of
such objects is a fine moduli space given by $\Hilb^n\T\times\dT$. We
can view this asymptotically as a Bridgeland stable moduli space (see
\cite{BrK3}) in a certain abelian subcategory $\A_0$ (defined below). There is a
1-parameter family of stability conditions indexed by a positive real
number $t$ in $\A_0$. For some large $t_0$, if $t>t_0$ the moduli
functor
\begin{equation}\label{e:modfunct}
\M^{(1,2\ell,4-n)}_t:\Sigma\mapsto\left\{a\in D^b(\T\times
  \Sigma):\begin{matrix}\ch(a_\sigma)=(1,2\ell,4-n),\\ a_\sigma\text{ is
      $t$-stable, }
\text{for all $\sigma\in \Sigma$, } \end{matrix}\right\}/\sim,\end{equation}
where $i_\sigma:\T\to\T\times \Sigma$ is the inclusion corresponding
to $\sigma\in \Sigma$, $a_\sigma=Li^*a_\sigma$ and $\sim$ is the usual
equivalence relation $a\sim a\otimes\pi^*_\Sigma M$, for any line bundle
$M$ on $\Sigma$, 
is represented by $\Hilb^n\T\times \dT$. We shall omit the superscript
on $\mathcal{M}$ if the context is clear. As $t$ decreases, we expect to cross
walls as some of the objects become unstable. The object of this paper
is to describe the resulting moduli spaces $M_t$ for all $t>0$ and
all $n\geq0$. In fact for $n<3$, the torsion-free sheaves are
$t$-stable for all $t>0$.  When $n>3$ there is more than one moduli
space and each moduli space is modified by a
Mukai flop as we cross the wall. This is very similar to what is
found in Arcara and Bertram \cite{Arcara07} where the case $\ch(a)=(0,H,H^2/2)$ is
computed in the abelian category $\A_{H/2}$ (there $H$ is some
polarization). In that case, the 
situation is made complicated by the presence of ``higher rank
walls'' and a complete picture is not given. In our case, there is
only one higher rank wall (when $n=5$) and we can give an explicit
description of that case. Of course our results are much less general
than those of \cite{Arcara07}. We pay this price in order to have a
useful computational tool at our disposal which allows us to be more
explicit in our constructions. However, Arcara and Bertram do prove
that the resulting moduli spaces really do exist as smooth proper
schemes representing \ref{e:modfunct}. We discuss this in section 4.

That tool is the Fourier-Mukai transform. We choose to use the
original such transform defined  by Mukai in \cite{Muk81} (see
\cite{HuyBook} and \cite{BBHBook} for an exposition of the theory),
but shifted by $[1]$ in $D(\dT)$. We shall
denote this by $\Phi:D(\T)\to D(\dT)$. As is now well known, this is
an equivalence of categories. It was used extensively in \cite{Mac11}
to understand how divisors in the linear system $|2\ell|$ intersect and we
shall use several of those computations below. Pulling back the
transform to include a parameter space $\Sigma$ allows us to observe that
$\Phi$ preserves moduli in the sense that if $M$ together with a
universal object $\EE$ represents a moduli functor $\M$ on $\T$ then
$\Phi(M)$ together with $\Phi_\Sigma(\EE)$ represents the pullback functor
$\Phi_\Sigma^*(\M_t)$. But we can improve this using an observation of
Huybrechts (\cite{HuyFMT}). He showed that for any given Fourier-Mukai
transform there is a choice of $\mathbb{R}$-polarizations $\beta$ on
$\T$ and $\beta'$ and $\dT$ such that $\Phi:\A_\beta\stackrel\sim\to\A_{\beta'}$
and moreover $\Phi_\Sigma^*(\M_t)=\Hat\M_{t'}$ for some $t'$ depending on
$t$, $\beta$ and $\beta'$ and where $\Hat\M$ is the same functor as
$\M$ but for $\dT$ and with $\Phi(\ch(a_\sigma))$ instead of
$\ch(a_\sigma)$. In our case, $\beta=0=\beta'$ and $t'=1/t$. We also
have the formula that $\Phi(r,c\ell,\chi)=(-\chi,c\ell,-r)$. We can see from
this why $n=5$ is special for us as that is precisely the case when
$\ch(a)$ is preserved by $\Phi$. Immediately we can conclude that
$\M_t$ is represented by $\Phi(M_{1/t})$ for all $t<1/t_0$ (we shall
see that $t_0=\sqrt{3}$). From
\cite{Mac11} we know that some $L^2\P_\xhat\I_W$, where $|W|=5$ are
not WIT and so there are elements of $M_t$ for small $t$ which are not sheaves.
However, it turns out that this is the only time that non-sheaves can
arise.

We shall see that for any $n$ there are $d=\lfloor\frac{n-1}{2}\rfloor$
walls except when $n=5$ when there is an additional (so called, higher
rank) wall. So there are $\lfloor\frac{n-1}{2}\rfloor+1$ moduli
spaces $M_0$,\ldots, $M_d$ where $M_0$ corresponds to $t\gg 0$. Now
$M_0$ is well known to be given by Gieseker stable sheaves (in this
case, actually $\mu$-stable) and so the usual GIT construction shows
that it is projective. On the other hand, we shall see that
$\Phi(M_d)$ are represented by sheaves as well (so long as $n>3$) and
hence, $M_d$ is also projective. To show that the other spaces $M_i$ are
projective we observe that we can vary $\beta$ and in a suitable range
each moduli space corresponds to a moduli space of Bridgeland stable
objects for $t$ arbitrarily small. Then we can apply a suitable
Fourier-Mukai transform to show that $M_i$ is isomorphic to a
Bridgeland stable moduli space of sheaves but now with $t$
large which are again known to be projective. The difficult step here
is to show that the transforms of the points of $M_i$ are pure
sheaves.  

Finally, we look at the $n=3$, $n=4$ and $n=5$ cases in more
detail. In many ways, the $n=3$ case is the most interesting. There is
a single wall in that case and we show explicitly that the two moduli
spaces are isomorphic. Crossing the wall corresponds to a birational
transformation which replaces a $\mathbb{P}^1$-fibred codimension $1$
subspace with its dual fibration. We will see  explicitly that the resulting
birational map between the two moduli spaces does not extend to an
isomorphism (even though the spaces are actually isomorphic). It also
turns out that for nearby $\A_s$ with $s>0$ there is another wall and
this time it is a codimension $0$ wall.

A more general study of the relation between wall crossing and
Fourier-Mukai transforms is given in \cite{Yosh11}.
\section*{Notation}
\begin{tabbing}
$P$, $Q$, $Y$, $Z$, $W$ \qquad\= 0-schemes of lengths 1, 2, 3, 4 and 5,
respectively\\
$\I_X$\> ideal sheaf of general 0-scheme $X$\\
$L$\> fixed choice of polarizing line bundle with $c_1(L)=\ell$\\
$D(\T)$\> bounded derived category of coherent sheaves on $\T$\\
$(r,c\ell,\chi)$\> typical Chern character of an object
of $D(\T)$\\
$\T\cong\dT$\> canonical identification via our choice of $L$.\\
$a$, $b$, $d$, $e$, \ldots\> arbitrary objects of $D(\T)$\\
$A^i$, $B^i$, $D^i$, $E^i$, \ldots\> cohomology of $a$, $b$, $d$, $e$,\ldots\\
This last piece of notation is to avoid clutter with $H^i({-})$.
\end{tabbing}
\section{Stability Conditions on Abelian Surfaces}
Following Bridgeland \cite{BrK3}, we consider a special collection of
stability conditions on our abelian surface $(\T,\ell)$. These arise
as tilts of $\coh_\T$ and are parametrized by a complex K\"ahler class
$\beta+i\omega$. We will take $\omega=t\ell$ and $\beta=s\ell$. Then
we define a torsion theory by:
\begin{align*}
F_s&=\{E\in\coh_\T:E\text{ is TF, }\mu_+(E)\leq 2s\}\\
T_s&=\{E\in\coh_\T:E\text{ is torsion or }\mu_-(E/\tors(E))>2s\}
\end{align*}
We let the associated tilted abelian subcategories be denoted by
$\A_s$. Explicitly,
\[\A_s=\{a\in D(\T):A^i=0,\ i\neq-1,0,\ A^{-1}\in F_s,\
A^0\in T_s\}\]
(recalling our notational convention that $A^i=H^i(a)$).
This carries a 1-parameter family of stability conditions whose charge
is
\begin{align*} Z_{s,t}(a)&=\langle e^{(s+it)\ell},\ch(a)\rangle\\
&=-\chi+2sc-r(s^2-t^2)+2it(c-rs),
\end{align*}
where $\ch(a)=(r,c\ell,\chi)$. Recall for an abelian surface that the
top part of the Chern character of $a$ is equal to the Euler character
$\chi(a)$.
For a quick proof that this defines a stability condition see
\cite[Cor 2.1]{Arcara07}.
Then $Z_{s,t}$ provides us with a Bridgeland stability condition
on $\A_{s}$. We can then declare an object $a\in\A_{s}$ to be
(Bridgeland) \emph{$t$-stable} provided for each proper subobject
$b\to a$ in $\A_{s}$, we have $\mu_t(b)<\mu_t(a)$, where the
\emph{$t$-slope} $\mu_t(a)$ is given by
\[-\frac{\Re Z_{s,t}(\ch(a))}{\Im
  Z_{s,t}(\ch(a))}=\frac{\chi-2sc+r(s^2-t^2)}{2t(c-rs)}.\]
We view this as taking values in $\mathbb{R}\cup\{\infty\}$, taking an
infinite value precisely when the denominator vanishes.
As an example of how this works we prove the following easy
generalization of \cite[Lemma 3.2]{Arcara07}
\begin{lemma}
If $E$ is a $\mu$-stable torsion-free sheaf which is not locally-free and
$\mu(E)\leq 2s$ then $E[1]\in\A_s$ is not $t$-stable for any $t>0$.
\end{lemma}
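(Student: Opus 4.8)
The plan is to exhibit an explicit proper subobject of $E[1]$ in $\A_s$ whose $t$-slope is infinite, since a subobject of infinite slope can never satisfy the strict inequality required for $t$-stability, and this will work uniformly in $t$.

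First I would check that $E[1]$ genuinely lies in $\A_s$. As $E$ is $\mu$-stable and torsion-free we have $\mu_+(E)=\mu_-(E)=\mu(E)\le 2s$, so $E\in F_s$ and hence $E[1]\in\A_s$. Because $E$ is not locally-free, its double dual $E^{**}$ (reflexive, hence locally free on the surface $\T$) fits into a short exact sequence $0\to E\to E^{**}\to Q\to 0$ with $Q=E^{**}/E\neq 0$ supported in dimension $0$; in particular $Q$ is torsion, so $Q\in T_s\subset\A_s$ and $\length(Q)>0$.

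Next I would promote this to a short exact sequence inside the heart $\A_s$. The key subsidiary claim is that $E^{**}\in F_s$ as well: any subsheaf of $E^{**}$ meets $E$ in a subsheaf of the same rank and first Chern class (the two differ only by a $0$-dimensional sheaf), so $\mu$-stability of $E$ forces $\mu_+(E^{**})=\mu(E^{**})=\mu(E)\le 2s$. Hence $E^{**}\in F_s$ and $E^{**}[1]\in\A_s$. Rotating the triangle attached to the double-dual sequence gives a triangle $Q\to E[1]\to E^{**}[1]\to Q[1]$ all of whose vertices now lie in $\A_s$; since the cone $E^{**}[1]$ of $Q\to E[1]$ lies in $\A_s$ (so has vanishing $(-1)$st cohomology for the tilted $t$-structure), the map $Q\to E[1]$ is a monomorphism in $\A_s$, yielding a short exact sequence $0\to Q\to E[1]\to E^{**}[1]\to 0$ in $\A_s$. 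As $Q\neq 0$ and $E^{**}[1]\neq 0$, this realises $Q$ as a proper nonzero subobject of $E[1]$. I expect this identification of $Q$ as a genuine subobject of the tilted category to be the main point requiring care, since it is exactly where the hypothesis $\mu(E)\le 2s$ enters (through $E^{**}\in F_s$).

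Finally I would compute the $t$-slope of the destabilising subobject. Since $\ch(Q)=(0,0,\length(Q))$, the denominator $2t(c-rs)$ of $\mu_t(Q)$ vanishes while the numerator equals $\length(Q)>0$, so $\mu_t(Q)=\infty$ for every $t>0$. Stability of $E[1]$ would require $\mu_t(Q)<\mu_t(E[1])$, which is impossible as nothing exceeds $\infty$; hence $E[1]$ is not $t$-stable for any $t>0$, and this conclusion is uniform in $t$ and insensitive to whether $\mu(E)<2s$ or $\mu(E)=2s$.
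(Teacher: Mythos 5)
Your proposal is correct and follows essentially the same route as the paper: both pass to the double-dual sequence $0\to E\to E^{**}\to Q\to 0$ (the paper writes $Q=\OO_X$), rotate it to the short exact sequence $0\to Q\to E[1]\to E^{**}[1]\to 0$ in $\A_s$, and observe that the $0$-dimensional subobject $Q$ has $\mu_t(Q)=\infty$ for every $t>0$. Your extra care in verifying $E^{**}\in F_s$ and that the triangle rotation yields a monomorphism in the tilted heart simply makes explicit what the paper leaves implicit.
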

\begin{proof}
Observe that we have $\mu_+(E)=\mu(E)<2s$ and so $E\in F_s$. Hence,
$E[1]\in \A_{s}$. But if $X$ is the 0-scheme of the singularity
set of $E$ then we have a sheaf short exact sequence
\[0\to E\to E^{**}\to \OO_X\to 0.\]
Note that $E^{**}$ is still $\mu$-stable and of the same slope as $E$
and so $E^{**}[1]\in\A_s$. Then 
\[0\to \OO_X\to E[1]\to E^{**}[1]\to 0\]
is short exact in $\A_s$. But $\mu_t(\OO_X)=\infty$ and so cannot
be less than $\mu_t(E)$ for any $t$.  
\end{proof}

We also prove
\begin{lemma}
The objects $a$ of $\A_s$ with infinite $t$-slope are given by the
short exact sequence (in $\A_s$)
\[0\to E[1]\to a\to \OO_X\to 0\]
where $X$ is a 0-scheme (possibly empty) and $E$ is a $\mu$-semistable
torsion-free sheaf of slope $2s$ or is the zero sheaf.
\end{lemma}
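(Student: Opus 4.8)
The plan is to reduce the condition of infinite $t$-slope to the vanishing of the imaginary part of the central charge, and then to read off the structure of $a$ directly from the defining torsion pair of $\A_s$. Since
\[\mu_t(a)=-\frac{\Re Z_{s,t}(a)}{\Im Z_{s,t}(a)}\]
takes the value $\infty$ exactly when $\Im Z_{s,t}(a)=2t(c-rs)$ vanishes, and $t>0$, infinite $t$-slope is equivalent to $c=rs$, i.e. to $\Im Z_{s,t}(a)=0$. By the very definition of $\A_s$, every such $a$ has cohomology concentrated in degrees $-1$ and $0$, so it fits into a canonical short exact sequence in $\A_s$
\[0\to A^{-1}[1]\to a\to A^0\to 0,\]
with $A^{-1}\in F_s$ and $A^0\in T_s$. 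Writing $E=A^{-1}$ and $\OO_X=A^0$, the task is therefore to show that $\Im Z_{s,t}(a)=0$ forces $A^0$ to be supported in dimension zero and $A^{-1}$ to be $\mu$-semistable of slope $2s$ (or zero).

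The key step is additivity of $\Im Z_{s,t}$ together with its non-negativity on the two pieces. Since $\ch(E[1])=-\ch(E)$, one computes $\Im Z_{s,t}(A^{-1}[1])=2t(r_E s-c_E)$, and the condition $A^{-1}\in F_s$ (so $\mu(A^{-1})\leq\mu_+(A^{-1})\leq 2s$, equivalently $c_E\leq r_E s$) gives $\Im Z_{s,t}(A^{-1}[1])\geq 0$. For $A^0$ I would use its torsion filtration $0\to\tors(A^0)\to A^0\to Q\to 0$: the torsion part contributes $2t\,c_{\tors}\geq 0$ because $\ell$ is ample, while $Q$, being torsion-free with $\mu_-(Q)>2s$ when nonzero, contributes $\Im Z_{s,t}(Q)=2t(c_Q-r_Q s)>0$. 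Hence $\Im Z_{s,t}(A^0)\geq 0$ as well, and since $\Im Z_{s,t}(a)=\Im Z_{s,t}(A^0)+\Im Z_{s,t}(A^{-1}[1])$, the hypothesis $\Im Z_{s,t}(a)=0$ forces each summand to vanish separately.

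It then remains to analyse the two vanishings. From $\Im Z_{s,t}(A^0)=0$ and the strict positivity above, $Q=0$ and $c_{\tors}=0$, so $A^0$ is a torsion sheaf with $c_1(A^0)\cdot\ell=0$; ampleness of $\ell$ then forces its support to be zero-dimensional, which is our $\OO_X$. From $\Im Z_{s,t}(A^{-1}[1])=0$ we get $c_E=r_E s$, i.e. $\mu(E)=2s$ (or $E=0$); combined with $\mu(E)\leq\mu_+(E)\leq 2s$ this squeezes $\mu_+(E)=\mu(E)=2s$, so $E$ is $\mu$-semistable of slope $2s$. The converse is immediate: any $E$ of this type lies in $F_s$ and any zero-dimensional $\OO_X$ lies in $T_s$, so the extension lies in $\A_s$ with $\Im Z_{s,t}=0$, hence infinite slope. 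The only step requiring genuine care, and the one I expect to be the main obstacle, is the semistability deduction, since it is the sole place where the full strength of the definition of $F_s$ through $\mu_+$ is used; the reduction of $A^0$ to a zero-dimensional sheaf is routine once ampleness of $\ell$ is invoked.
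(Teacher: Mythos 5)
Your proof is correct, and it reaches the conclusion by a somewhat different and cleaner route than the paper. The paper works directly with slopes on the canonical sequence $0\to A^{-1}[1]\to a\to A^0\to 0$: it supposes $r(A^0)>0$, invokes the seesaw property of slope functions to compare $\mu(a)$, $\mu(A^{-1})$ and $\mu(A^0)$, derives a contradiction with the infinite-slope condition $\mu(a)=2s$, and then, once $r(A^0)=0$, squeezes $\mu(A^{-1})=2s$ and $c_1(A^0)=0$ out of a single identity; the $\mu$-semistability of $A^{-1}$ and the converse direction are left implicit. You instead use that $\Im Z_{s,t}$ is additive on the same sequence and nonnegative on each piece of the torsion pair --- this nonnegativity being exactly what the definitions of $F_s$ and $T_s$ encode --- so that $\Im Z_{s,t}(a)=0$ forces both summands to vanish, and a further split of $A^0$ along its torsion filtration isolates where the positivity is strict. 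This buys you three things: you avoid the paper's case analysis and the sign subtleties in its seesaw step (as printed, the inequality comparing $\mu(a)$ with $\mu(A^0)$ is only justified when $r(A^0)>r(A^{-1})$, the other rank configurations needing separate, unwritten arguments, whereas your summand-by-summand vanishing is uniform in the ranks); you treat the torsion part of $A^0$ explicitly rather than through the aggregate slope $c'/r'$; and you obtain the converse inclusion --- needed since the lemma asserts the infinite-slope objects \emph{are given by} such extensions --- essentially for free, via $F_s$ and $T_s$ membership of the two pieces. Two minor remarks: your final identification of $A^0$ with $\OO_X$ shares an abuse of notation already present in the paper, since a sheaf with zero-dimensional support need not be the structure sheaf of a subscheme (e.g.\ $\OO_x^{\oplus 2}$), but that is a defect of the statement rather than of your argument; and the torsion-freeness of $E$, which you use silently, is simply part of the definition of $F_s$, so nothing is missing there. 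Also note that your squeeze $\mu_+(E)=\mu(E)=2s$ does give semistability, since $\mu_+\geq\mu$ always holds with equality exactly for $\mu$-semistable sheaves, so the step you flagged as the main obstacle is in fact sound as written.
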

\begin{proof}
Suppose first that $r(A^0)>0$. Then let $\mu(A^{-1})=c/r$ and
$\mu(A^0)=c'/r'$. For $\mu_t(a)=\infty$ we require $\mu(a)=2s$. But 
$\mu(a)=\frac{c'-c}{r'-r}>\frac{c'}{s}$ because $c'/r'>c/r$ (this is a
characterising property of slope functions: if $x\to y\to z$ is a
short exact sequence then $\mu(y)>\mu(z)$ implies
$\mu(x)>\mu(y)$). But $c'/r'>2s$ as $A^0\in T_s$. This contradiction
implies that $r'=0$. But then $2s=(c'-c)/(-r)=\mu(A^{-1})-c'/r\leq 2s$
with equality only if $\mu(A^{-1})=2s$ and $c'=0$ as required.
\end{proof}

Finally in this section we make the following useful observations
(left as exercises for the reader).
\begin{prop}\label{p:genfact}
\rule{0pt}{0pt}
\begin{enumerate}
\item (Schur's lemma) If $a\in \A_s$ is $t$-stable for some $t>0$ then $\Hom(a,a)$
  consists of automorphisms.
\item If $E\in\A_s\cap\coh_\T$ and there is some $t_0$ such that
  for all $t>t_0$, $E$ is $t$-stable then $E$ must be
  torsion-free. (In fact, any torsion subsheaf must eventually
  $t$-destabilise it).
\item More generally, if $E\in\A_s\cap\coh_\T$ then there is some $t_0$ such
  that for all $t>t_0$, $E$ is $t$-stable if and only if $E$ is (twisted)
  Gieseker stable.
\item If $E[1]\in\A_0\cap\coh_\T[1]$ has $c_1(E)=0$  then $E$
  is $\mu$-semistable.
\item If $E\in \A_0\cap\coh_\T$  satisfies
  $c_1(E)=\ell$ then $E/\tors(E)$ is $\mu$-semistable (or zero).
\end{enumerate}
\end{prop}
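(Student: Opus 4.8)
The plan is to treat the five parts essentially independently, since (1) and (3) are general facts about the tilted hearts $\A_s$, while (2), (4) and (5) come down to slope estimates special to $(\T,\ell)$. For (1) I would run the standard argument for stable objects, the only input being that $Z_{s,t}(b)=0$ forces $b=0$ for any $b\in\A_s$ (nonzero objects have charge in the upper half plane, by construction of the stability condition). Given $0\neq f\in\Hom(a,a)$, form the kernel, image and cokernel in $\A_s$. If $\ker f\neq 0$ then $\im f$ is a \emph{proper} quotient of $a$, so $\mu_t(\im f)>\mu_t(a)$, while simultaneously $\im f\hookrightarrow a$ is a subobject, forcing $\mu_t(\im f)\leq\mu_t(a)$ --- a contradiction. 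Hence $f$ is injective, so $\im f\cong a$ and $Z_{s,t}(\coker f)=Z_{s,t}(a)-Z_{s,t}(\im f)=0$; thus $\coker f=0$ and $f$ is an isomorphism.

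For (2) I would assume $E$ has positive rank (the case of interest), so that $T=\tors(E)$ is a \emph{proper} subobject: both $T$ and $E/T=E/\tors(E)$ lie in $T_s$ (the latter because $\mu_-(E/\tors(E))>2s$ as $E\in T_s$), so the sheaf sequence $0\to T\to E\to E/T\to 0$ is exact in $\A_s$. The substance is the large-$t$ behaviour of $\mu_t$: writing $\ch(E)=(r,c\ell,\chi)$ with $r>0$ one has $\mu_t(E)\sim-\tfrac{rt}{2(c-rs)}\to-\infty$, whereas a $0$-dimensional torsion subsheaf has $\mu_t=\infty$ and a $1$-dimensional one has $\mu_t\to 0$. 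Either way $\mu_t(T)>\mu_t(E)$ for $t\gg 0$, so $T$ destabilises $E$; hence $t$-stability for all large $t$ forces $\tors(E)=0$.

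For (3) the same asymptotic expansion is the engine. Expanding $\mu_t(E)=-\tfrac{rt}{2(c-rs)}+\tfrac{\chi-2sc+rs^2}{2(c-rs)t}$, the leading coefficient orders objects by their $\mu$-slope (the parameter $s$ cancels, since $\tfrac{c-rs}{r}=\mu(E)-s$), and the subleading coefficient by reduced Euler characteristic --- precisely the lexicographic order defining (twisted) Gieseker stability. It then remains to reduce destabilising $\A_s$-subobjects to subsheaves: for a sheaf $E\in T_s$ and an $\A_s$-subobject $b\hookrightarrow E$, the cohomology long exact sequence of $0\to b\to E\to q\to 0$ gives $B^{-1}=0$, so $b$ is a sheaf, and one matches the Bridgeland and Gieseker comparisons. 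I expect this to be the main obstacle: one must control, \emph{uniformly in $t$}, the potentially infinite family of destabilisers. I would handle this by invoking boundedness (a Bogomolov-type inequality) so that only finitely many numerical types occur, and then cite the large-volume-limit theorem of Bridgeland and of Arcara--Bertram for the precise matching of the two stability notions.

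Finally (4) and (5) are direct estimates at $s=0$. In (4), $E[1]\in\A_0$ means $E\in F_0$, i.e.\ $E$ is torsion-free with $\mu_+(E)\leq 0$; since $c_1(E)=0$ gives $\mu(E)=0\leq\mu_+(E)$, we get $\mu_+(E)=\mu(E)=0$, which forces every Harder--Narasimhan factor to have slope $0$, i.e.\ $E$ is $\mu$-semistable. In (5), $E\in T_0$ means either $E$ is torsion (whence $E/\tors(E)=0$) or $\mu_-(\bar E)>0$ for $\bar E=E/\tors(E)$. Using $\Pic\T=\langle\ell\rangle$ and $\ell^2>0$, positivity of $\mu_-(\bar E)$ first shows $\tors(E)$ is $0$-dimensional, so $c_1(\bar E)=\ell$; then each Harder--Narasimhan factor $G_i$ of $\bar E$ has $c_1(G_i)=a_i\ell$ with $a_i\geq 1$ an integer and $\sum_i a_i=1$, which is possible only with a single factor. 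Hence $\bar E$ is $\mu$-semistable.
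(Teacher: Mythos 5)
The paper offers no proof for you to be compared against: Proposition~\ref{p:genfact} is introduced with ``left as exercises for the reader,'' so your write-up can only be judged on its own merits, and on those it is essentially correct and surely the intended argument. Parts (1), (4) and (5) are complete: the Schur argument uses only that $Z_{s,t}$ sends nonzero objects of $\A_s$ into the semi-closed upper half-plane (so $Z_{s,t}(b)=0$ forces $b=0$), and your Picard-rank-one bookkeeping in (5) --- $\mu_-(E/\tors(E))>0$ kills one-dimensional torsion, then $\sum_i a_i=1$ with each $a_i\geq 1$ forces a single Harder--Narasimhan factor --- is exactly right. Two remarks. First, in (2) your restriction to positive rank is not merely ``the case of interest'' but is necessary for the statement to be true at all: by Proposition~\ref{p:stabs}(5) a pure torsion sheaf with $c_1=\ell$ is $t$-stable for \emph{every} $t>0$ yet is not torsion-free, so the paper's (2) must implicitly carry $r(E)>0$; it is worth saying this explicitly rather than parenthetically. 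Granting that, your asymptotics are sound: $c-rs>0$ follows from $\mu(E)\geq\mu(E/\tors(E))>2s$, and $0\to \tors(E)\to E\to E/\tors(E)\to 0$ is exact in $\A_s$ because both outer terms lie in $T_s$. Second, (3) is the one place you lean on citation rather than proof. Your expansion $\mu_t(E)=-\tfrac{rt}{2(c-rs)}+\tfrac{\chi-2sc+rs^2}{2(c-rs)t}$ does reproduce the twisted Gieseker lexicographic order (note that with the paper's conventions $\deg(E)=c_1(E)\cdot\ell=2c$, so $(c-rs)/r=\mu(E)/2-s$, a harmless factor-of-two slip that does not affect the ordering), and the genuine content --- a $t_0$ uniform over the infinitely many potential destabilisers $b\to E$, which are sheaves mapping to $E$ with kernel $Q^{-1}\in F_s$ rather than honest subsheaves --- is precisely what the large-volume-limit results of \cite{BrK3} and \cite{Arcara07} supply. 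Invoking them is legitimate for a statement the paper itself leaves as an exercise, but a self-contained proof would require the Bogomolov-type boundedness you gesture at to actually be carried out, together with the comparison of $b$ with its image subsheaf (discarding $Q^{-1}$, whose slope is bounded above by $2s$, only improves the relevant slopes for $t\gg 0$).
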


For the rest of this paper we will be interested purely in the case
$0\leq s<1$. These have slopes
\[\frac{4-n-4s+s^2-t^2}{2t(2-s)}.\]
%Note that for these objects $A^0$ cannot be a torsion sheaf although
%it need not be torsion-free. 

Our starting point is the following well known theorem (see for
example \cite{HLBook}) translated into our context:
\begin{thm}
There is some real number $t_0>0$ such that for all $t>0$, $\M^{(1,2\ell,4-n)}_t$ is
represented by the projective space $\Hilb^n\T\times\dT$. A universal sheaf $\EE_t$ is
given by $\pi_1^*L^2\otimes\pi_{13}^*\P\otimes\pi_{12}^*\mathbb{I}_{\mathcal
  Z}$,
where $\P$ is the Poincar\'e bundle over $\T\times\dT$,
$\pi_i$ and $\pi_{ij}$ is the projections from $\T\times\Hilb^n\T\times\dT$ to
the $i^{\mathrm{th}}$ and $ij^{\mathrm{th}}$ factors respectively, and $\I_{\mathcal{Z}}$ is the
ideal sheaf of the tautological universal subscheme
$\mathcal{Z}\subset\T\times\Hilb^n\T$.
\end{thm}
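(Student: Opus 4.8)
The plan is to show that for $t$ beyond some threshold $t_0$ the $t$-stable objects of Chern character $(1,2\ell,4-n)$ are exactly the $\mu$-stable torsion-free sheaves of this character, and then to read off the representing space and universal sheaf from the classical Gieseker construction. First I would identify the sheaves: a rank-one torsion-free sheaf $E$ sits in $0\to E\to E^{**}\to\OO_X\to 0$ with $E^{**}$ a line bundle, and since $c_1(E)=2\ell$ and $\Pic\T=\langle\ell\rangle$ we get $E^{**}\cong L^2\P_\xhat$ for a unique $\xhat\in\dT$, while comparison of $\ch_2$ forces $\length(X)=n$. Thus these sheaves are precisely the $L^2\P_\xhat\I_X$, parametrised by $\Hilb^n\T\times\dT$. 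Each has slope $\mu(E)=2$, so $\mu_-(E)=2>2s$ for $0\le s<1$ and hence $E\in T_s\subset\A_s$; being rank one and torsion-free it is automatically (twisted) Gieseker stable, so \propref{p:genfact}(3) supplies a threshold past which $E$ is $t$-stable. By boundedness of the family of semistable sheaves with these invariants this threshold can be taken uniform, and we call it $t_0$.

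For the converse I would run the large-volume argument. Let $a\in\A_s$ be $t$-stable with $\ch(a)=(1,2\ell,4-n)$ and consider the canonical sequence $0\to A^{-1}[1]\to a\to A^0\to 0$ in $\A_s$, with $A^{-1}\in F_s$ and $A^0\in T_s$. If $A^{-1}\ne0$ then, as $F_s$ contains no torsion, $r(A^{-1})>0$, and the subobject $A^{-1}[1]$ has $\Im Z_{s,t}\geq0$ since it lies in $\A_s$. Expanding the $t$-slope then shows that $\mu_t(A^{-1}[1])$ tends to $+\infty$ (or is already infinite), whereas $\mu_t(a)\sim -t/(2(2-s))\to-\infty$; hence for $t>t_0$ the subobject $A^{-1}[1]$ destabilises $a$, a contradiction. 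Therefore $A^{-1}=0$, so $a=A^0$ is a genuine sheaf, which by \propref{p:genfact}(2) must be torsion-free. Thus $a$ is one of the sheaves above and, for $t>t_0$, the $t$-stable objects of this character are precisely the points of $\Hilb^n\T\times\dT$.

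It remains to address representability, and here I would appeal to the standard theory of Gieseker moduli (see \cite{HLBook}). Since the invariants have rank one the moduli functor of stable sheaves is fine and is represented by $\Hilb^n\T\times\dT$, which is projective as the product of the Hilbert scheme of the projective surface $\T$ with the abelian variety $\dT$. The sheaf $\pi_1^*L^2\otimes\pi_{13}^*\P\otimes\pi_{12}^*\I_{\mathcal Z}$ restricts over the fibre $(X,\xhat)$ to $L^2\P_\xhat\I_X$, so it is the universal object $\EE_t$ and the identification with $\M^{(1,2\ell,4-n)}_t$ follows. The main obstacle is the converse step—specifically, securing a single $t_0$ valid for all objects at once, which rests on the boundedness just invoked; once that is in place, the slope asymptotics eliminating $A^{-1}$ are routine.
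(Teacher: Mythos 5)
Your overall route is the standard one the paper itself relies on (the theorem is quoted as the classical large-volume-limit statement, with representability delegated to \cite{HLBook}): classify the rank-one torsion-free sheaves as $L^2\P_\xhat\I_X$, check they lie in $T_s$, invoke \propref{p:genfact}(3) for the forward direction, and read off the universal object from the Gieseker/GIT picture. Those parts are fine (modulo a harmless slope-normalisation slip: with the paper's conventions $\mu(L^2\I_X)=4$, not $2$).

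The gap is in your converse. The argument ``$\mu_t(A^{-1}[1])\to+\infty$ as $t\to\infty$, hence $A^{-1}[1]$ destabilises for $t>t_0$'' is an asymptotic statement for a \emph{fixed} object, whereas the theorem asserts that at a single fixed $t>t_0$ there is no $t$-stable $a$ with $A^{-1}\neq 0$, uniformly over all conceivable $A^{-1}$. At fixed $t$ this slope need not be large: for instance $\ch(A^{-1})=(1,-k\ell,k^2)$ (so $A^{-1}=L^{-k}$) gives $\mu_t(A^{-1}[1])=(t^2-k^2)/(2tk)\to-\infty$ as $k\to\infty$, so the subobject does not destabilise at all. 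The boundedness you invoke --- of the family of Gieseker-semistable sheaves with fixed invariants --- controls the forward threshold but says nothing about the invariants of a hypothetical $A^{-1}$. What actually closes the argument is a numerical bounding of the destabilising data: since $A^0\in T_0$ has every Harder--Narasimhan factor of $A^0/\tors(A^0)$ of positive slope and $\ch(a)=(1,2\ell,4-n)$ is fixed, one gets $c_1(A^{-1})\in\{0,-\ell\}$, and then Bogomolov applied to the HN factors of $A^{-1}\in F_0$ caps $\chi(A^{-1})$, after which the numerator $r t^2-\chi+\cdots$ is positive for all $t$ past an explicit bound, uniformly. This is precisely the wall-finiteness that the paper establishes in \S 2 (the top wall sits at $t=\sqrt{n-2}$), and without some version of it your $t_0$ does not exist as a single number. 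The same caveat applies to your use of \propref{p:genfact}(2): as stated it assumes stability for \emph{all} $t>t_0$, so ruling out torsion in $A^0$ at one fixed $t$ again needs the bound on the torsion subsheaf's invariants (0-dimensional torsion has infinite slope and is immediate, but 1-dimensional torsion with very negative Euler characteristic must be excluded via the quotient's invariants), not just the limit $t\to\infty$.
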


Using Proposition \ref{p:genfact}(3) again and the observation in the
introduction about the Fourier-Mukai transform preserving moduli, we
also have non-empty fine projective moduli spaces $M_t^{(n-4,2\ell,-1)}$ for
$n\geq4$ and so we also see that $\M^{(1,2\ell,4-n)}_t$ is represented
by this space for all $t$ less than some $t_1$.

The situation for $n<3$ is cleared up by the following proposition.
\begin{prop} \label{p:stabs}(see \cite{Arcara09}) The following holds
  in $\A_s$ for all $t>0$ and all $0\leq s<1$.
\begin{enumerate}
\item For all integers $m>0$ $L^m$ is $t$-stable.
\item For all integers $m\leq0$, $L^m[1]$ is $t$-stable.
\item If $E\in \A_s\cap\coh_\T$ has $c_1(E)=\ell$ and $r(E)= 1$ and is
  torsion-free then $E$ is $t$-stable. 
\item For all 0-schemes $X\subset \T$ with $|X|<3$, $L^2I_X$ is
  $t$-stable for $s=0$.
\item If $E\in\A_0$ is a pure torsion sheaf with $c_1(E)=\ell$ then
  $E$ is $t$-stable.
\end{enumerate}
\end{prop}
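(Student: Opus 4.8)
The plan is to treat all five parts by a single mechanism. An object $a\in\A_s$ can fail to be $t$-stable for some $t>0$ only if it becomes strictly semistable along a ``wall'', i.e. there is a value $t_w>0$ and a proper subobject $b\hookrightarrow a$ in $\A_s$ with $\mu_{t_w}(b)=\mu_{t_w}(a)$. Since $Z_{s,t}$ is $\Z$-linear in $\ch$ and $\Pic\T=\langle\ell\rangle$, the numerical type of $b$ is an integer triple $(r(b),c(b),\chi(b))$, and the requirement that both $b$ and the quotient $c$ lie in $\A_s$ forces $0\le\Im Z_{s,t}(b)\le\Im Z_{s,t}(a)$, that is $0\le c(b)-r(b)s\le c(a)-r(a)s$. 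For the objects at hand $c(a)-r(a)s$ is small, so this pins $c(b)$ to a short finite list. I would then dispatch the boundary cases $\Im Z(b)=0$ or $\Im Z(c)=0$ using the lemma above characterising objects of infinite $t$-slope together with the torsion-pair vanishing $\Hom(T_s,F_s)=0$, and reduce the remaining finitely many cases to a single slope inequality $\mu_t(b)<\mu_t(a)$ valid for all $t>0$.

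For parts (1) and (2) I would start from the cohomology long exact sequence of $b\to a\to c$. Since $L^m$ is a sheaf, $A^{-1}=0$ forces $B^{-1}=0$, so the destabiliser is an honest sheaf $b=B^0\in T_s$ whose image in $L^m$ is a rank-$1$ subsheaf (rank $0$ being excluded by $T_s\cap F_s=0$); part (2) is the mirror statement obtained by the same argument applied to $A^0=0$. An infinite-slope subobject would have a torsion quotient $\OO_X$ mapping into the torsion-free $L^m$, which is impossible, so $c(b)$ lies strictly between the endpoints. For each such numerical type the condition $\mu_t(b)=\mu_t(a)$ is, after clearing $2t$, of the form $\big(r(a)c(b)-r(b)c(a)\big)t^2+K=0$; solving for $t^2$ and feeding in the Bogomolov bound $\chi(b)\le c(b)^2/r(b)$ (valid because a maximal destabiliser is itself semistable) I expect to obtain $t^2\le 0$, so that no wall meets $t>0$ and $\mu_t(b)<\mu_t(a)$ throughout.

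Parts (3) and (5) are where the minimality of $c_1=\ell$ does the work: here $c(a)=1$, so the constraint $0\le c(b)-r(b)s\le 1-s$ leaves no integer value of $c(b)$ strictly between the endpoints, and only the boundary cases survive. A subobject with $c(b)=0$ has infinite slope and is ruled out, for (3) because a torsion sheaf cannot inject into the torsion-free $E$, and for (5) because purity of $E$ forbids a $0$-dimensional subsheaf; a subobject attaining the maximal $c(b)$ pushes the quotient onto the opposite boundary, where it cannot destabilise. Thus in these cases there is simply no room for a destabilising subobject, and I would only need to check the $s>0$ bookkeeping in which the quotient can acquire negative rank.

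The main obstacle is part (4), and it is precisely here that the hypothesis $|X|<3$ is essential rather than cosmetic. Running the wall computation for $a=L^2I_X=(1,2,4-n)$ at $s=0$ gives $t^2=\big((4-n)c(b)-2\chi(b)\big)/\big(c(b)-2r(b)\big)$; the most dangerous subobject is $b=L$, which occurs exactly when $X$ is collinear (lying on the unique theta divisor $D\in|\ell|$, using $h^0(L)=1$), and it produces a wall at $t^2=n-2$. For $n<3$ this is non-positive, so $L$ never destabilises, and the real work is to show uniformly, using the $\Im Z$ constraint, integrality, and the Bogomolov bound on $\chi(b)$, that every other admissible numerical type likewise yields $t^2\le 0$. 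Verifying that this sharp cutoff at $n=3$ cannot be evaded by a cleverly chosen higher-rank subobject is the step I expect to demand the most care.
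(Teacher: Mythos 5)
Your overall mechanism — destabilisers are sheaves, the $\Im Z$ constraint pins the degree, boundary cases are killed by the infinite-slope lemma and the torsion pair, and Bogomolov eliminates the remaining walls (with the wall $t^2=n-2$ from $b=L$ correctly identified) — is indeed the paper's, and your treatment of (1), (2) and (5) is sound. But there is a genuine gap in part (3) for $0<s<1$. Your claim that $0\leq c(b)-r(b)s\leq 1-s$ ``leaves no integer value of $c(b)$ strictly between the endpoints'' is only true at $s=0$: for $s>0$ the quantity $c(b)-r(b)s$ need not be an integer, and higher-rank subobjects land strictly inside the interval. Concretely, a sheaf $K\in T_s$ with $(r(K),c(K))=(2,1)$ has $0<c(K)-r(K)s<1-s$ for all $0<s<1/2$, so your ``no room'' argument does not touch it, and your closing remark that only ``negative rank quotient'' bookkeeping remains misidentifies the difficulty. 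The paper has to do real work here: having shown there are no walls for these objects on the line $s=0$, it uses that each numerical wall is a semicircle centred on the $s$-axis, computes the centre $s=-\frac{1}{2}(\chi-r\chi(E))/(r-c)$, deduces $\chi<r\chi(E)$ from the absence of walls at $s=0$, and then reduces the destabilising inequality to $0<(-\chi(E)+s^2)(c-r)$, a contradiction unless $\chi(E)=1$, which is case (1). Some argument of this kind is indispensable, and the full range $0\leq s<1$ in (3) is exactly what is used later (the ``slide down the wall'' projectivity argument and the surgery section), so it cannot be waved through.

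Two smaller problems. First, your licence to apply Bogomolov — ``a maximal destabiliser is itself semistable'' — conflates Bridgeland semistability with $\mu$-semistability; Bogomolov needs the latter. The paper repairs this either by passing to the $\mu$-Harder--Narasimhan factors of $K$ (as in its proof of (1), where each factor $K'$ is shown to satisfy $\mu_t(K')<\mu_t(L^m)$, whence so does $K$), or, in (4), by invoking Proposition~\ref{p:genfact}(5), which applies because the destabilising sheaf there has $c_1=\ell$. Second, in (4) you explicitly defer the elimination of higher-rank subobjects, but this is precisely the content of the proof and it is short once set up: at $s=0$ the constraint pins $c(K)\in\{0,1,2\}$; $c(K)=0$ is impossible (a $0$-dimensional sheaf cannot inject into the torsion-free $L^2\I_X$ since the kernel $Q^{-1}$ is torsion-free); $c(K)=2$ forces $c_1(q)=0$, an infinite-slope quotient which cannot destabilise; and for $c(K)=1$ the sheaf $K$ is $\mu$-semistable by Proposition~\ref{p:genfact}(5), so Bogomolov gives $\chi(K)\leq 1/r\leq 1$, while destabilisation requires $2\chi(K)>4-n\geq 2$ for $n<3$ — a contradiction. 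So the strategy you propose is the right one, but the two computations that actually constitute the proof — part (3) for $s>0$, and the rank reduction in (4) — are respectively broken and left undone.
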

\begin{proof}
Case (1) is treated in \cite[Proposition 3.6(b)]{Arcara09} but we can
give a more direct proof by observing that a destabilizing object must
be a sheaf $K\to L^m$. Now assume that $K$ is
$\mu$-semistable and $K\to L^m$ is non-zero. If its Chern character is $(r,c\ell,\chi)$ then 
we can re-arrange $\mu_t(K)-\mu_t(L^m)\geq0$ to 
\[t^2\leq\frac{(m-s)(\chi-c(s+m)+mrs)}{rm-c}\]
But $m-s>0$ and $rm-c>0$ (to ensure $\Hom(K,L^m)\neq0$). Now
$r(\chi-c(s+m)+mrs)=(r\chi-c^2)+(c-rs)(c-rm)$. The second term is
negative as $K\in T_s$. The first term is non-positive by
Bogomolov. So every factor $K'$ of the $\mu$-Harder-Narasimhan
filtration of $K$ has $\mu_t(K')<\mu_t(L^m)$ and so $K$ cannot
destabilize $L^m$. (2) is
similar and we leave as an exercise for the reader.

For (3) observe that if $k\to E$ is supposed to destabilize $E$ then
the image of $K^0\to E$ must have slope equal to $E$ and so $Q^0$ is
supported on points, where $q=E/k$ in $\A_s$. But $K^{-1}=0$  and we
are left with a long exact sequence (in $\coh_\T$) 
\[0\to Q^{-1}\to K\to E\to Q^0\to0.\]  
Now assume that $s=0$. Then $\deg(Q^{-1})=\deg(K)-2<0$ since if it equalled to $0$, $q$ would have
infinite slope if $s=0$ and could not destabilize $E$.  But this implies
$\deg(K)=0$ and this can only happen if $K$ is supported in dimension
0, which is impossible as $E$ is torsion-free. This also applies if
$E$ is pure rank $0$ as well and so we have (5) as well.

Returning to (3) with $0<s<1$, we have
just shown that there are no walls intersecting the line $s=0$. Each
wall is a semicircle with centre on the $s$-axis. Let
$\ch(K)=(r,c\ell,\chi)$. Then the destabilizing condition is
\begin{equation}\label{e:destab}\chi(1-s)+s^2(c-r)-\chi(E)(c-rs)>0.\end{equation}
But for $Q^{-1}\in F_s$ we must have $c-1\leq s(r-1)<r-1$. The centre
of the semicircular wall has
\[s=-\frac{1}{2}\frac{\chi-r\chi(E)}{r-c}\]
Since there are no walls at $s=0$ we have $\chi<r\chi(E)$. Then the
destabilizing condition \ref{e:destab} becomes
\[0<-\chi(E)(c-rs-r(1-s))+s^2(c-r)=(-\chi(E)+s^2)(c-r).\]
This is a contradiction unless $\chi(E)=1$. But this is dealt with in (1).

For (4) we proceed as follows (this will be typical of such
proofs). We suppose $L^2\I_X$ is not $t$-stable. Then there must exist
destabilising subobjects $k\to L^2\I_X$. Let the quotient (in $\A_0$)
be $q$ as above. Again $K^{-1}=0$.
Now $K=K^0$ must be torsion-free (because $Q^{-1}\in F_0$) and so has
 positive degree. Let the Chern character of $K$ be $(r,c\ell,\chi)$. 
 Then the
fact that it destabilizes gives us the inequality
\[2\chi+(n-4)c\geq (2r-c)t^2.\]
 But
$\deg(Q^{-1})\leq s<1$ and so $\deg(K/Q^{-1})\geq 2c$. But $r(K/Q^{-1})=1$
and so $\deg(K/Q^{-1})=2$ or $4$. In the latter case, if $c=2$ then
$c_1(q)=0$ but then $q$ cannot destabilize after all. If $c=1$ then
$K$ must be $\mu$-semistable by Prop \ref{p:genfact}(5) and so
$\chi\leq1$ 
by the Bogomolov inequality. But $2\chi\geq 4-n>1$ for $n<3$. So
$\chi=1$ and $n=2$. But this only destabilizes if $t=0$ which is impossible. This
contradiction shows that no such $K$ can exist.
\end{proof}
Note that we only used $n<3$ at the very end so we see more generally
that the only possible destabilising subobject must be a
$\mu$-semistable sheaf of degree $2$. Moreover $\chi\geq 4-n$.

\section{Identifying the Candidate Stable Objects}
Now we look for which objects may be representatives of points of our
moduli spaces. In other words, we find objects $a$ with Chern
character $(1,2\ell,4-n)$ which are $t$-stable for some $t>0$. In this
section we start by assuming assuming $s=0$. 

\begin{prop}\label{p:poss}
Suppose $e\in\A_0$ with $\ch(e)=(1,2\ell,4-n)$ is $t$-stable for some $t>0$. Then, either
\begin{enumerate}
\item $e$ is a torsion-free sheaf $E$, i.e. $E=L^2\I_X\P_{\hat x}$ for
  some $X\in\Hilb^n\T$ and $\hat x\in\dT$, or 
\item $e$ is a sheaf $E$ with torsion, in which case, $\tors(E)$ is
  a line bundle supported on some $D_x$ of degree $4-n+m$ and
  $E/\tors(E)\cong L\I_{X'}\P_{\hat x}$ for some $\xhat\in \dT$ and
  $X'\in\Hilb^m\T$, where $0\leq m <(n-2)/2$, or
\item $e$ is a two-step complex with $E^{-1}\cong L^{-1}\P_{\hat
    x}$ for some $\xhat\in \dT$ and $E^0$ a $\mu$-stable
  locally-free sheaf with $\ch(E^0)=(2,\ell,0)$ only when $n=5$. 
\end{enumerate}
\end{prop}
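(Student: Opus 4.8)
The plan is to recover the cohomology sheaves of $e$ from the torsion pair defining $\A_0$ and let $t$-stability constrain them. Writing $E^{-1}=H^{-1}(e)\in F_0$ and $E^0=H^0(e)\in T_0$, we have the short exact sequence $0\to E^{-1}[1]\to e\to E^0\to 0$ in $\A_0$, in which $E^{-1}[1]$ is a subobject and $E^0$ a quotient; the argument then splits according to whether $E^{-1}$ vanishes.

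First I would dispose of the sheaf case $E^{-1}=0$, where $e=E^0$ is a rank-one sheaf. A nonzero $0$-dimensional torsion subsheaf $\OO_X\hookrightarrow e$ would be a subobject of infinite $t$-slope, which cannot sit inside a $t$-stable object, so $\tors(E^0)$ is pure of dimension one. If $E^0$ is torsion-free we land in (1): its reflexive hull is the line bundle $L^2\P_{\xhat}$, whence $E^0=L^2\I_X\P_{\xhat}$ with $\length X=n$. If $\tors(E^0)\neq 0$, write $c_1(E^0/\tors)=c'\ell$; membership in $T_0$ forces $c'\geq 1$, while the class $(2-c')\ell$ of $\tors(E^0)$ must be effective and nonzero, forcing $c'\leq 1$. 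Hence $c'=1$, $E^0/\tors\cong L\I_{X'}\P_{\xhat}$, and $\tors(E^0)$ is a rank-one torsion-free (hence invertible) sheaf on a single genus-two curve $D_x$. Riemann--Roch on $D_x$ gives its degree as $4-n+m$ with $m=\length X'$, and the subobject inequality $\mu_t(\tors(E^0))<\mu_t(e)$ rearranges to $2m<n-2-t^2$, i.e. $0\leq m<(n-2)/2$. This is case (2).

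The substance of the statement is the complex case $E^{-1}\neq 0$, where I must show $n=5$ with exactly the stated shape. Here $E^0\neq 0$ (otherwise $e=E^{-1}[1]$ has negative rank), so $E^{-1}[1]$ is a proper subobject; since $E^{-1}\in F_0$ we have $c_1(E^{-1})\leq 0$, and $c_1(E^{-1})=0$ is excluded because it would give $E^{-1}[1]$ infinite $t$-slope. Passing to $\bar E:=E^0/\tors(E^0)$, which is $\mu$-semistable by \propref{p:genfact}(5) and of positive slope (being in $T_0$), forces $c_1(E^{-1})=-\ell$ and $\tors(E^0)$ to be $0$-dimensional; a short slope computation using $\Pic\T=\langle\ell\rangle$ (a maximal destabiliser would need rank exceeding that of $E^{-1}$) then shows $E^{-1}$ is itself $\mu$-semistable. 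Writing $\ch(E^{-1})=(r_1,-\ell,\chi_1)$ and $\ch(\bar E)=(r_1+1,\ell,\bar\chi)$, Bogomolov's inequality on each gives $\chi_1\leq 1/r_1$ and $\bar\chi\leq 0$. The two stability inequalities, $\mu_t(E^{-1}[1])<\mu_t(e)$ and $\mu_t(\bar E)>\mu_t(e)$, reduce to the symmetric pair $(2r_1+1)t^2<2\chi_1+(4-n)$ and $(2r_1+1)t^2<2\bar\chi+(n-4)$, whose right-hand sides must be positive. Thus $\chi_1>(n-4)/2$ and $\bar\chi>(4-n)/2$; combined with the Bogomolov bounds these squeeze $n\leq 5$ and $n\geq 5$, so $n=5$, and then force $r_1=1$, $\chi_1=1$, $\bar\chi=0$ and $\tors(E^0)=0$. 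Hence $E^{-1}=L^{-1}\P_{\xhat}$ and $E^0=\bar E$ has $\ch(E^0)=(2,\ell,0)$.

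Finally I would upgrade the $\mu$-semistable $E^0$ to a $\mu$-stable locally free sheaf. Stability is automatic: a rank-two sheaf with odd first Chern class $\ell$ has no rank-one subsheaf of equal slope (the slope $1$ is not of the form $2a$), so $\mu$-semistable forces $\mu$-stable. Local freeness follows from Bogomolov applied to the reflexive hull $(E^0)^{**}$: a nonzero quotient $(E^0)^{**}/E^0=\OO_X$ would raise $\ch_2$ to $\length X>0$, contradicting $\ch_2((E^0)^{**})\leq c_1^2/4=\tfrac12$. The main obstacle is the complex case, and within it the simultaneous control of $r_1$ and of the torsion of $E^0$: both collapse only when the two Bogomolov inequalities and the two (subobject and quotient) slope inequalities are played against one another, and it is the positivity of the right-hand sides of that symmetric pair that pins $n$ to the single value $5$.
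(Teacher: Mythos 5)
Your proof is correct and follows essentially the same route as the paper's: the torsion-pair decomposition $0\to E^{-1}[1]\to e\to E^0\to 0$, exclusion of $0$-dimensional torsion and of $c_1(E^{-1})=0$ via infinite $t$-slope subobjects, forcing $c_1(E^{-1})=-\ell$ from $T_0$-positivity of $E^0/\tors(E^0)$, and Bogomolov on $E^{-1}$ and on $\bar E$ played against the subobject and quotient slope inequalities to squeeze $n=5$ and then $r_1=\chi_1=1$, $\bar\chi=0$, $p=0$; your symmetric packaging of the two inequalities is a slightly cleaner presentation of exactly the paper's computation, and you additionally spell out the $\mu$-stability and local freeness of $E^0$, which the paper merely asserts.

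One justification, however, is incomplete as written: in establishing that $E^{-1}$ is $\mu$-semistable, your parenthetical reason ``a maximal destabiliser would need rank exceeding that of $E^{-1}$'' only disposes of destabilisers $D\subset E^{-1}$ with $\deg(D)\leq -2$, since $\mu(D)=\deg(D)/\rk(D)>-2/r_1$ with $\deg(D)\leq-2$ indeed forces $\rk(D)>r_1$. Because $\Pic\T=\langle\ell\rangle$ makes all degrees even and $E^{-1}\in F_0$ bounds them above by $0$, the remaining case is $\deg(D)=0$, where $\mu(D)=0>-2/r_1$ imposes no rank constraint at all, so the rank comparison fails precisely there. This case must be excluded as the paper does: the maximal destabiliser $D$ is $\mu$-semistable of slope $0$, so both $D$ and $E^{-1}/D$ lie in $F_0$, and the composite $\A_0$-injection $D[1]\to E^{-1}[1]\to e$ is then a proper subobject of infinite $t$-slope, contradicting $t$-stability of $e$. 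Since this is the same infinite-slope trick you already deploy twice (for $\OO_X\hookrightarrow E$ and for $c_1(E^{-1})=0$), the repair is immediate, but it is needed: without full $\mu$-semistability of $E^{-1}$ the Bogomolov bound $\chi_1\leq 1/r_1$, on which your squeeze to $n=5$ rests, is unavailable.
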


\begin{proof}
We have already seen that if $e$ is a torsion-free sheaf then it is
$t$-stable for large enough $t$. So we assume that $e$ is not a
torsion-free sheaf.

Now suppose $e$ is a sheaf $E$ with torsion subsheaf $\tors(E)$. Since
sheaves supported on 0-schemes have infinite slope any such subsheaf $S$
of $\tors(E)$ would destabilize $E$ for all $t$ as $E/S\in
T_0$. Observe also that $E$ is not a torsion sheaf and so
$\deg(\tors(E))=2$. Hence, $\tors(E)$ is supported on a translate of
$D$ and locally-free on its support. Suppose it has degree $d$ (so
$\chi(\tors(E))=d-1$). Let $F=E/\tors(E)$. Then
$\ch(F)=(1,\ell,5-n-d)$ and $F$ is torsion-free. So $F\cong
L\P_{\xhat}\I_{X'}$, where $|X|'=n+d-4=m$. Then $d=4-n+m$. But
$\mu_t(\tors(E))=(d-1)/2t$ and this will always destabilize $E$ if
$m\geq(n-2)/2$. So we require $m< (n-2)/2$. Note that such $E$ cannot
be $t$-stable for $t\geq\sqrt{n-2+2m}$ as they are destabilized by their
own torsion.

Now suppose that $e$ is not a sheaf. Let $\ch(E^{-1})=(r,c\ell,\chi)$
with $r\geq1$. Then $\ch(E^0)=(r+1,(2+c)\ell,4-n+\chi)$. then $c<0$
(because if $c=0$, $E^{-1}[1]$ would destabilize $e$ for all $t$). But
$2+c>0$ and so $c=-1$ is the only possible value and $E^{-1}$ must be
$\mu$-semistable. Indeed, if $D$ was a potential $\mu$-destabilising
object then $\deg(D)=0$ and the composite $\A_0$-injection $D[1]\to E^{-1}[1]\to
E$ would destabilise $E$ for all $t>0$; contradiction. Thus, by
Bogomolov, we have $\chi\leq1$ and $E$ is $t$-stable for some
$t>0$ if and only if 
for some $t>0$, $\mu_t(E)<\mu_t(E^0)$ which is equivalent to
$0<(2r+1)t^2<4-n+2$. This implies $n<6$. 

Now let $F=E^0/\tors(E^0)$. Then $c_1(F)=c_1(E)$ as $c_1(E)$ is minimal in $T_0$ and
$\chi(F)=4-n+\chi-p\leq0$ by Bogomolov and Prop
\ref{p:genfact}(5), for some $p\geq0$. But composing $\A_0$-surjections $e\to E^0\to
F$, we see that  there must exist $t$ such that
$\mu_t(F)-\mu_t(e)>0$. This can only happen if
$4-n+2\chi-2p>0$. But $\chi-p\leq n-4$ and so $n-4>0$. Hence, $n=5$
is the only possibility.

When $n=5$ we have $\chi(E^{-1})=\chi=1$ which can only happen if
$r(E^{-1})=1$. Then $E^{-1}\cong L^{-1}\P_{\xhat}$ for some
$\xhat\in\dT$. We also have $s=0$ and $\ch(E)=(2,\ell,0)$. Such a
$\mu$-semistable sheaf must be $\mu$-stable and locally-free.
\end{proof}
So we see that if $n\neq5$, only sheaves can be $t$-stable for some
$t$; all other objects are $t$-unstable for all $t$.

The proposition does not prove that cases (2) and
(3) do actually arise. To show that (3) does arise we use the
Fourier-Mukai transform. Observe that $E^{-1}[1]\to e$ will
destabilize if $t\geq 1/\sqrt{3}$. We now compute the Fourier-Mukai
transform of these objects.
\begin{prop}
Suppose $e\in\A_s$ has $\ch(E^{-1})=(1,-\ell,1)$ and
$\ch(E^0)=(2,\ell,0)$ with $E^0$ torsion-free. Then $\Phi(e)$ is a
torsion-free sheaf. 
\end{prop}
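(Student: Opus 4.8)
The plan is to apply $\Phi$ to the canonical truncation triangle of $e$ and read off $\Phi(e)$ from the transforms of its two cohomology sheaves. First I would note that, although the hypothesis is phrased in $\A_s$, in fact $e\in\A_0$: since $E^{-1}$ is torsion-free of rank one with $\ch(E^{-1})=(1,-\ell,1)$ it must be $L^{-1}\P_{\xhat}$, which has $\mu_+=-2\le0$ and so lies in $F_0$; and $E^0\in T_s$ forces $\mu_-(E^0)>2s\ge0$, so $E^0\in T_0$. As $\Pic\T=\langle\ell\rangle$ there is no line bundle of slope $1$, so a torsion-free rank-two sheaf with $c_1=\ell$ and $\mu_->0$ is automatically $\mu$-stable; hence $E^0$ is $\mu$-stable. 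Because $\Phi\colon\A_0\xrightarrow{\sim}\A_0$, we get $\Phi(e)\in\A_0$ for free, so its cohomology sits in degrees $-1,0$ with $H^{-1}(\Phi(e))\in F_0$ and $H^0(\Phi(e))\in T_0$. The task is thus reduced to showing $H^{-1}(\Phi(e))=0$ and that $H^0(\Phi(e))$ is torsion-free.

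The truncation triangle $E^{-1}[1]\to e\to E^0\to E^{-1}[2]$ becomes, after the exact equivalence $\Phi$,
\[\Phi(E^{-1})[1]\to\Phi(e)\to\Phi(E^0)\to\Phi(E^{-1})[2].\]
Since $E^{-1}\cong L^{-1}\P_{\xhat}$ is the inverse of an ample line bundle it satisfies $\mathrm{IT}_2$, so its Mukai transform is a sheaf placed in degree $2$; therefore $\Phi(E^{-1})$ is a sheaf in degree $1$ and $G:=\Phi(E^{-1})[1]$ is a genuine line bundle in degree $0$, with $\ch(G)=(1,\ell,1)$. For $\Phi(E^0)$ I claim $E^0$ is $\mathrm{WIT}_1$. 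Indeed $S^2(E^0)=0$: by Serre duality $H^2(E^0\otimes\P_{\yhat})\cong H^0(E^{0\vee}\otimes\P_{-\yhat})^\vee$, and a nonzero section would produce a slope-$0$ sub-line-bundle of the $\mu$-stable sheaf $E^{0\vee}$ of slope $-1$, which is impossible, and $R^2$ commutes with base change. Also $S^0(E^0)=0$: the sheaf $\hat\pi_*(\pi^*E^0\otimes\P)$ is torsion-free because $E^0$ is, while its rank is the generic value of $h^0(E^0\otimes\P_{\yhat})$, which vanishes since the fixed $\mu$-stable sheaf $E^0$ has only finitely many degree-$0$ sub-line-bundles. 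Hence $\Phi(E^0)$ is a single torsion sheaf in degree $0$ with $\ch(\Phi(E^0))=(0,\ell,-2)$, pure of dimension one.

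With both outer terms now sheaves concentrated in degree $0$, the long exact cohomology sequence of the triangle $G\to\Phi(e)\to\Phi(E^0)\to G[1]$ collapses: $H^{-1}(\Phi(e))=0$, all higher cohomology vanishes, and there is a short exact sequence
\[0\to G\to\Phi(e)\to\Phi(E^0)\to0\]
in $\coh_{\dT}$. Thus $\Phi(e)$ is a sheaf, extension of the pure one-dimensional torsion sheaf $\Phi(E^0)$ by the line bundle $G$, and $\ch(\Phi(e))=(1,2\ell,-1)$, the self-dual $n=5$ class.

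The remaining, and genuinely delicate, point is torsion-freeness, and this is where stability must enter. The split object $e=E^{-1}[1]\oplus E^0$ shows the conclusion is false without it, for then $\Phi(e)=G\oplus\Phi(E^0)$ has the torsion summand $\Phi(E^0)$; so one uses that $e$ is $t$-stable (the objects under consideration are $t$-stable for $t<1/\sqrt3$). As $\Phi$ preserves stability with $t'=1/t$, the sheaf $\Phi(e)$ is $t'$-stable in $\A_0$. Any torsion subsheaf $\tau$ of $\Phi(e)$ lands in $\Phi(E^0)$ (because $G$ is torsion-free) and is therefore pure one-dimensional; realising it as a subobject of $\Phi(e)$ in $\A_0$, I would compare $t'$-slopes and show $\mu_{t'}(\tau)\ge\mu_{t'}(\Phi(e))$ at the relevant $t'>\sqrt3$, contradicting stability unless $\tau=0$. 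Carrying out this slope estimate uniformly over all admissible torsion subsheaves $\tau\subseteq\Phi(E^0)$ is the main obstacle; once it is in place, $\Phi(e)$ is a torsion-free sheaf, as claimed.
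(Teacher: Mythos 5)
Your first half coincides with the paper's argument: the paper runs the Mukai spectral sequence $\Phi^{p+q}(e)\Leftarrow\Phi^p(E^q)$ (equivalent to your truncation triangle), identifies $E^{-1}\cong L^{-1}\P_\xhat$ with $\Phi(E^{-1})\cong\tau_\xhat^*\hat L[-1]$, asserts that $E^0$ transforms to a rank one torsion sheaf of degree $-1$ on some $D_x$, and arrives at the same short exact sequence $0\to\tau_\xhat^*\hat L\to\Phi(e)\to\Phi(E^0)\to0$. You supply more detail than the paper on the WIT statement for $E^0$, but one sub-claim there is unjustified and in fact false as stated: a $\mu$-stable $E^0$ with $\ch(E^0)=(2,\ell,0)$ can contain a positive-dimensional family of flat sub-line-bundles (e.g.\ for an extension of $L\P_\yhat\I_p$ by $\P_{\yhat'}$, every $\P_{-\yhat''}$ with the corresponding translate of $D$ passing through $p$ maps in, since the obstruction space $\Ext^1(\P_{-\yhat''},\P_{\yhat'})$ vanishes). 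What your torsion-freeness argument for $\hat\pi_*(\pi^*E^0\otimes\P)$ actually requires is only that the jumping locus $\{\yhat:h^0(E^0\otimes\P_\yhat)\neq0\}$ is a \emph{proper} subvariety of $\dT$, and that still needs an argument you have not given.

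The genuine gap is the final torsion-freeness step, which you explicitly leave as ``the main obstacle.'' Worse, the route you propose is unavailable in context: the paper uses this proposition precisely to \emph{deduce} that such $e$ are $t$-stable for small $t$ (because $\Phi(e)$, being a twisted ideal sheaf, is $1/t$-stable for $1/t\gg0$), so assuming $t$-stability of $e$ as an input is circular, and it is not part of the hypotheses. Moreover the slope estimate you defer does not close near the wall: for a torsion subsheaf $\tau\subseteq\Phi(E^0)$ with $\ch(\tau)=(0,\ell,\chi_\tau)$ one computes $\mu_{t'}(\tau)\geq\mu_{t'}(\Phi(e))$ if and only if $t'^2\geq-1-2\chi_\tau$, so $t'$-stability for $t'$ just above $\sqrt{3}$ only excludes $\chi_\tau\geq-2$, i.e.\ the split case $\tau=\Phi(E^0)$; proper subsheaves of degree $\leq-2$ are not excluded unless you already know stability for all $t'>\sqrt3$, i.e.\ the whole wall structure. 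The paper avoids stability altogether: any torsion injects into $\Phi(E^0)$, hence is pure of degree $\leq-1$ on $D_x$, with equality only when the sheaf extension splits --- excluded because the class of $e$ in $\Ext^2(E^0,E^{-1})\cong\Ext^1(\Phi(E^0),\tau_\xhat^*\hat L)$ is nonzero and is preserved by the equivalence; then $\Phi(e)/\tors(\Phi(e))$ would be torsion-free of rank one with $c_1=\ell$ and $\chi=-1-\chi(\tau)\geq2$, impossible since such sheaves are of the form $L\P_\yhat\I_X$ with $\chi\leq1$. Your observation that the split object $e=E^{-1}[1]\oplus E^0$ violates the literal statement is a correct and worthwhile catch --- the paper's phrase ``degree less than $-1$'' silently assumes the extension nontrivial --- but the fix is this numerical argument, not Bridgeland stability.
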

\begin{proof}
We use the spectral sequence $\Phi^{p+q}(e)\Leftarrow \Phi^p(E^q)$.
We have $\Phi(E^{-1})\cong\tau_\xhat^*\hat L[-1]$ (see \cite{Muk81} or
\cite{Mac11}) and $\Phi(E^0)$ is a torsion sheaf of rank 1 supported on some
$D_x$ of degree $-1$. Then the spectral sequence has only two non-zero
terms $E_2^{1,-1}\cong\tau_{\xhat}^*\hat L$ and $E_2^{0,0}\cong
\Phi(E^0)$. So we have a short exact sequence (in $\A_0$):
\[0\to \tau_\xhat^*\hat L\to \Phi(e)\to \Phi(E^0)\to0\]
and so $\Phi(e)$ is in $\A_\T\cap\coh_\T$. To see that it is
torsion-free observe that any torsion must be supported on $D_x$ with
degree less than $-1$. Then $\Phi(e)/\tors(\Phi(e))$ would have Euler
characteristic bigger than 1 which is impossible for a torsion-free
sheaf or rank 1 and degree 2. 
\end{proof}
So $\Phi(e)$ takes the form $\hat L^2\P_x\I_{\hat X}$ for some $\hat
X\in\Hilb^5\dT$ and $x\in\T=\Hat{\dT}$. But for $1/t$ sufficiently
large this is $1/t$-stable and so $e$ is $t$-stable for $t$
sufficiently small. Hence, case (3) does arise (but only if $n=5$).

For case (2), consider a torsion sheaf $G$ supported on $D_x$ of rank
1 and degree $4-n+m$ and some $X'\in\Hilb^m\T$, for some
$m<(n-2)/2$. Observe that
\[\chi(L\I_{X'},G)=1-n+m<-n/2<0\]
and so $\Ext^1(L\I_{X'},G)\neq0$ and hence there are non-trivial extensions
\[0\to G\to E\to L\I_{X'}\to0.\]
$G$ will destabilize $E$ if $t\geq\sqrt{n-2+2m}$. If $t<\sqrt{n-2+2m}$
then we need to check that $E$ can be chosen to be $t$-stable.
 As before there must be a sheaf $K\in T_0$ and an injection $K\to E$
 in $\A_0$ which destabilizes. Let the quotient be $q$. Now both $G$
 and $L\I_{X'}$ are $t$-stable (by Proposition \ref{p:stabs}(3)). We can
 assume that $K$ is itself $t$-stable by picking the first
 Jordan-H\"older co-factor of the first Harder-Narasimhan factor. Then
 $\Hom(K,G)=0$ and so $\Hom(K,L\I_{X'})\neq0$. Note that $\deg(Q^{-1})=0$ as
$r(K/Q^{-1})=1$ and $\Hom(K/Q^{-1},L\I_{X'})\neq0$. But then
$\mu_t(K/Q^{-1})\geq\mu_t(K)$ and $K/Q^{-1}\to E$ injects in $\A_0$
which is impossible given the choice of $K$. So $q=Q^0=Q$ is a sheaf
and $r(K)=1$, $\deg(K)=2$. So $K\cong L\I_{X''}$ for some $X''\supset
X'$. But this can only destabilize for $t^2<n-2-2|X''|$. Hence, for
$\sqrt{n-4-2m}<t<\sqrt{n-2-2m}$, $E$ must be $t$-stable. So
again, case (2) does arise for all $n>2$.

Finally, let us consider the torsion free sheaves of the form
$E=L^2\P_\xhat\I_X$.  The argument at the end of the proof of
Proposition \ref{p:stabs} shows that any destabilizing object of $E$ must
be a torsion-free sheaf of degree $2$. In other words, there is some
0-scheme $X'$ of length $m$ and a map $L\P_\yhat\I_{X'}\to
L^2\P_\xhat\I_X$. As a sheaf map this injects with quotient $G$, a
torsion sheaf of rank $1$ supported on some $D_x$ of degree
$4-n-|X'|$. Now this destabilizes only when $t<\sqrt{n-2-2m}$. 
The existence of such a destabilizing subsheaf can be described
geometrically. The following refines Proposition \ref{p:stabs}(4).
\begin{prop}\label{p:geom}
Let $X$ be a 0-scheme of length $n$. Suppose $X''\subset X$ is a
collinear subscheme of maximal length. Then $E=L^2\P_\xhat\I_X$
  is $t$-stable for all $t>\sqrt{\max(0,2|X''|-n-2)}$. 
\end{prop}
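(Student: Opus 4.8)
The plan is to reduce $t$-stability of $E=L^2\P_\xhat\I_X$ to a purely numerical question about its sub-line-bundles, and then to identify those numerics with the geometry of lines through $X$. By the remark following the proof of Proposition \ref{p:stabs}, together with the discussion immediately preceding this statement, any subobject of $E$ that could destabilise it is a torsion-free sheaf of degree $2$, hence of the form $K=L\P_\yhat\I_{X'}$ for some $\yhat\in\dT$ and some $0$-scheme $X'$ of length $m$, and such a $K$ injects as a sheaf with quotient a rank-one torsion sheaf supported on a translate $D_x$. The slope computation already recorded shows that $K$ destabilises $E$ exactly when $t<\sqrt{n-2-2m}$. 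Since this threshold is a decreasing function of $m$, the object $E$ is $t$-stable above the outermost such wall, so everything comes down to determining the smallest admissible value of $m$.

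To pin that value down I would make the correspondence between destabilising sub-line-bundles and lines meeting $X$ precise. Composing $K\to E$ with the inclusion $E\hookrightarrow L^2\P_\xhat$ and twisting by $(L^2\P_\xhat)^{-1}$ rewrites the map as multiplication $\I_{X'}(-D_x)\xrightarrow{\,s\,}\I_X$ by a section $s\in H^0(\OO(D_x))=H^0(L\P_{\xhat-\yhat})$ whose zero locus is a single translate $D_x$; conversely every such section produces a candidate destabiliser, with $\yhat$ forced by the requirement $\OO(D_x)\cong L\P_{\xhat-\yhat}$. The relevant algebra is the residuation sequence
\[0\to\I_{X'}\otimes\OO(-D_x)\xrightarrow{\,s\,}\I_X\to\I_{X\cap D_x/D_x}\to0,\]
in which $X'$ is the residual scheme cut out by $(\I_X:s)$ and $\I_{X\cap D_x/D_x}$ is the ideal of $X\cap D_x$ inside the smooth genus-two curve $D_x$. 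Because $s$ is a nonzerodivisor, comparing Euler characteristics across this sequence gives the length count $m=\length(X')=n-\length(X\cap D_x)$, and enlarging $X'$ beyond the residual only increases $m$. Thus the minimal destabiliser attached to each line has $m=n-\length(X\cap D_x)$, and minimising $m$ over all destabilisers is the same as maximising $\length(X\cap D_x)$ over all lines.

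By definition the maximum of $\length(X\cap D_x)$ over all translates is $|X''|$, the length of a maximal collinear subscheme of $X$, the optimal line meeting $X$ in precisely $X''$. Hence the smallest admissible $m$ is $n-|X''|$, and the outermost wall sits at
\[t=\sqrt{\,n-2-2(n-|X''|)\,}=\sqrt{\,2|X''|-n-2\,}.\]
For $t$ strictly larger than this value, every candidate destabiliser satisfies $t\geq\sqrt{n-2-2m}$ and therefore fails to destabilise, so $E$ is $t$-stable. When $2|X''|-n-2\leq0$ even this outermost wall is imaginary: the inequality $t^2<n-2-2m$ is then vacuous for all $t>0$, and $E$ is $t$-stable throughout. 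The two regimes combine into the stated bound $t>\sqrt{\max(0,\,2|X''|-n-2)}$.

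The main obstacle is the middle step: verifying that every $\A_0$-subobject capable of destabilising really does reduce to an honest sheaf injection of the displayed shape, with no genuinely complex subobject intervening (this is exactly what the degree-two analysis supplies), and that the residuation sequence behaves as claimed — in particular that the cokernel is a rank-one torsion sheaf on $D_x$ with no embedded or lower-dimensional contribution, so that the clean identity $m=n-\length(X\cap D_x)$ holds. Once this geometric dictionary is in place, the slope inequality and the optimisation over lines are routine.
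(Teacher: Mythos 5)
Your proof is correct and follows essentially the same route as the paper's: both reduce the question, via the remark following Proposition~\ref{p:stabs} and the discussion preceding the statement, to destabilisers of the form $L\P_\yhat\I_{X'}$ with wall at $t=\sqrt{n-2-2m}$, and both identify the minimal $m$ as $n-|X''|$ using maximality of the collinear subscheme. Your residuation sequence merely spells out (somewhat more carefully, especially for non-reduced $X$) the correspondence that the paper compresses into the one-line assertion that the existence of $X''$ is equivalent to a non-zero map $L\P_\yhat\I_{X'}\to E$ with $X'=X\setminus X''$ of least length.
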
 
\begin{proof}
The existence of $X''$ is equivalent to the existence of a non-zero
map from $L\P_\yhat\I_{X'}\to E$ where $X'=X\setminus X''$ and $\yhat$
is some element of $\dT$. The maximality assumption implies that
$|X'|$ is least among such maps and so $E$ is $t$-stable for
all $t^2>n-2-2|X'|=2|X''|-n-2$. 
\end{proof}
Note that the codimension of such loci in
$\Hilb^n\T$ is $|X''|-2$. 
Collecting these results together, we can state the following.
\begin{thm}
In the $1$-parameter family of stability conditions $(\A_0,\mu_t)$ the
moduli functor $\M_t^{(1,2\ell,4-n)}$ has $\lfloor (n-1)/2\rfloor$
walls for all $n>0$ except for $n=5$ when there are $3$ walls. The
highest wall is at $t=\sqrt{n-2}$ and, except
for $|X|=5$, the lowest is at $\sqrt{1+(n+1\mod 2)}$
\end{thm}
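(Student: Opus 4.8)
The plan is to extract the walls directly from the classification of candidate stable objects. By \propref{p:poss}, every object $e$ with $\ch(e)=(1,2\ell,4-n)$ that is $t$-stable for some $t>0$ falls into one of three types: a torsion-free sheaf $L^2\P_\xhat\I_X$, a sheaf with torsion, or---only when $n=5$---a higher-rank two-step complex. A wall is therefore a value of $t$ at which the stability of one of these objects changes, and I would enumerate these values type by type.

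First I would dispose of the generic case $n\neq5$, where only the first two types occur. By \propref{p:geom} the sheaf $L^2\P_\xhat\I_X$ is $t$-stable precisely for $t>\sqrt{2|X''|-n-2}$, with $X''\subset X$ a collinear subscheme of maximal length. As $X$ varies over $\Hilb^n\T$ the integer $|X''|$ attains every value from $2$ to $n$ (place the points on a single translate $D_x$), so the candidate wall values are $t_k=\sqrt{2k-n-2}$; a genuine positive wall needs $2k-n-2>0$, i.e.\ $k>(n+2)/2$. Since $t_k$ is strictly increasing in $k$ the surviving values are distinct, and counting the integers $k$ with $(n+2)/2<k\leq n$ separately for $n$ even and $n$ odd gives exactly $\lfloor(n-1)/2\rfloor$ of them. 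Taking $k=n$ produces the highest wall $t=\sqrt{n-2}$; the least admissible $k$ is $n/2+2$ for $n$ even and $(n+3)/2$ for $n$ odd, yielding the lowest wall $\sqrt2$ respectively $1$, which is the claimed $\sqrt{1+(n+1\bmod2)}$.

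The step I expect to demand the most care is checking that the list $\{t_k\}$ is both complete and non-redundant: I must verify that the destabilisations of the torsion-free sheaves dovetail exactly with the appearance and disappearance of the torsion objects of type (2), so that no wall is overlooked or double-counted. For this I would compare stability ranges directly. The type-(2) extension with parameters $m$ and $X'$ is $t$-stable exactly on $\sqrt{n-4-2m}<t<\sqrt{n-2-2m}$, and a short computation shows its two endpoints are $t_{n-m}$ and $t_{n-m-1}$; as $m$ runs over $0\leq m<(n-2)/2$ these endpoints sweep out precisely the admissible $t_k$, with the lowest ones degenerating to $t=0$ or to imaginary values, i.e.\ to no wall. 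Hence the chambers interleave perfectly and $\{t_k\}$ is the full set of walls.

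Finally I would treat the exceptional case $n=5$. Here types (1) and (2) give the two walls $t=1$ and $t=\sqrt3$ (from $k=4$ and $k=5$), while type (3) contributes the higher-rank wall. As noted after \propref{p:poss}, the two-step complex with $E^{-1}\cong L^{-1}\P_x$ is destabilised by $E^{-1}[1]\to e$ exactly when $t\geq1/\sqrt3$, so this wall lies at $t=1/\sqrt3$, strictly below $1$. This gives three walls for $n=5$, with lowest wall $1/\sqrt3$ rather than $\sqrt{1+(n+1\bmod2)}=1$, which is the stated exception.
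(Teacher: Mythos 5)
Your proposal is correct and takes essentially the same route as the paper: the theorem there carries no free-standing proof but is stated by ``collecting'' exactly the ingredients you assemble---the classification of \propref{p:poss}, the stability interval $\sqrt{n-4-2m}<t<\sqrt{n-2-2m}$ for the torsion extensions, \propref{p:geom} for the twisted ideal sheaves, and the higher-rank wall at $t=1/\sqrt{3}$ when $n=5$. Your reindexing by $k=|X''|$ and the explicit interleaving count merely make the paper's implicit bookkeeping explicit (and your threshold $t\geq\sqrt{n-2-2m}$ for the torsion subsheaf is the correct one; the paper's ``$\sqrt{n-2+2m}$'' is a sign typo).
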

So the generating series for the number of walls is 
\[{\frac {{x}^{3} \left(1+ {x}^{2}-{x}^{3}-{x}^{4}+{x}^{5} \right) }{
 \left( 1+x \right)  \left(1-x \right) ^{2}}}
\]

We can extend this to $s$ in the interval $(0,1)$ by observing that
any further destabilizing objects for $L^2\I_X$ with Chern characters
$(r,c\ell,\chi)$ would result in a destabilizing condition of the form
\begin{align*}
0<\chi(2&-s)+(s^2+n-4)(c-2r)=\\
 &-\left(2r- c \right)  \left( s+\frac{1}{2}{\frac {\chi+(n-4)r}{2r-c}}
 \right) ^{2}+2\,\chi+(n-4)c+
\frac{1}{4}{\frac { \left(\chi+(n-4)r \right) ^{2}}{2r-c}}
\end{align*}
Note that $c/r<2$ as the destabilizing object must be a sheaf $K$ in $T_s$
for $0<s<1$ and the kernel of the map $K\to L^2\I_X$ is in $F_s$.
Since we require the centre to be in $(0,1)$ we have
$\chi<-(n-4)r$. But this contradicts the destabilizing inequality.
Combining this with Proposition \ref{p:stabs}(3), we have the following.
\begin{prop}\label{p:nowalls}
For all $n\geq4$, the only walls associated to the Chern character
$(1,2\ell,4-n)$ in the region $0\leq s<2$ are those 
which intersect $s=0$.
\end{prop}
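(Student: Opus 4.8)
The plan is to show that every numerical wall for $v=(1,2\ell,4-n)$ meeting the region $0\le s<2$ is a semicircle centred on the $s$-axis which crosses the line $s=0$. Throughout $0\le s<2$ the sheaf $L^2\I_X$ still lies in $T_s$, so any destabilising subobject is an honest sheaf $K\in T_s$ with $\ch(K)=(r,c\ell,\chi)$ whose kernel in $K\to L^2\I_X$ lies in $F_s$ (the long exact cohomology sequence of a subobject of a sheaf forces $K^{-1}=0$ and identifies the kernel with $Q^{-1}\in F_s$). As in the computation preceding the statement this forces $2r-c\ge0$, and the boundary case $2r-c=0$ occurs only for $K\cong L^2\I_{X'}$ with $X\subset X'$, which has the same $(r,c)$ as $v$ but smaller $\chi$ and hence strictly smaller $t$-slope for every $t$, so it never destabilises. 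Carrying out this reduction rigorously — confirming that $L^2\I_X$ stays in $T_s$ across the whole range and that every genuine destabiliser has $2r-c>0$ — is the step I expect to demand the most care, since it is what guarantees the equal-slope locus $\mu_t(K)=\mu_t(L^2\I_X)$ is a genuine semicircle.

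The key device is then to compute that semicircle. Solving $\mu_t(K)=\mu_t(L^2\I_X)$ for $t^2$ as a function of $s$ and completing the square gives the centre $s_0=-\tfrac12\,\frac{\chi+(n-4)r}{2r-c}$ recorded above; substituting this back, the dependence on $(r,c,\chi)$ cancels completely and the radius obeys the universal identity
\[
\rho^2=(s_0-2)^2-n .
\]
This cancellation is exactly where the specific class $(1,2\ell,4-n)$ enters; everything after it is formal. An actual wall requires $\rho^2>0$, i.e. $(s_0-2)^2>n$, so its centre satisfies $s_0<2-\sqrt n$ or $s_0>2+\sqrt n$. For $n\ge4$ we have $2-\sqrt n\le0$, so no wall can have its centre in $[0,2]$.

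It remains to locate the two surviving families by elementary geometry, using $\rho<|s_0-2|$. A wall with $s_0>2+\sqrt n$ has left-hand endpoint $s_0-\rho>2$ and so lies entirely in $s>2$, missing the region. A wall with $s_0<2-\sqrt n\le0$ has right-hand endpoint $s_0+\rho<2$; since its centre is negative, as soon as this endpoint is positive the semicircle runs from $s_0-\rho<0$ to $s_0+\rho>0$ and therefore crosses $s=0$ (and if the endpoint is $\le0$ the wall never enters $s>0$ at all). Hence every wall meeting $0\le s<2$ intersects $s=0$, which is the assertion. Alternatively, one may restrict to $0<s<1$ using the displayed destabilising inequality and then cover $1\le s<2$ by tensoring with $L^{-1}$, which shifts wall-centres by $-1$ and turns $v$ into the class of $L\I_X$, to which \propref{p:stabs}(3) applies.
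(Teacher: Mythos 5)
Your proof is correct, and while it starts from the same reduction as the paper --- destabilisers of $L^2\I_X$ are sheaves $K\in T_s$ with $\ch(K)=(r,c\ell,\chi)$ and kernel $Q^{-1}\in F_s$, leading to the completed square with centre $s_0=-\tfrac12\,\frac{\chi+(n-4)r}{2r-c}$ --- your endgame is genuinely different and in fact closes a gap in the paper's terse argument. The paper argues by sign: requiring the centre to lie in $(0,1)$ forces $\chi<-(n-4)r$, contradicting the destabilising inequality; as written this only excludes walls centred in $(0,1)$ and says nothing explicit about walls meeting $1\le s<2$, and its displayed destabilising condition carries a typo ($(n-4)(c-2r)$ should be $(n-4)(c-rs)$, as the paper's own completed square confirms). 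Your universal identity $\rho^2=(s_0-2)^2-n$ is correct --- I checked it both algebraically and against the paper's rank-one walls, where $s_0=-\tfrac{n-m-3}{2}$ and $\rho^2=\tfrac{(n-m-3)^2}{4}+n-2-2m$ --- and it converts the exclusion into pure circle geometry valid on the whole strip $0\le s<2$ (indeed it shows no wall has centre in $[0,4]$ at all). A further virtue of your formulation is that the identity holds for an arbitrary class $(r,c\ell,\chi)$ with $2r-c\neq0$, so via the symmetry between a destabilising class $v'$ and its complement $v-v'$ it also disposes of walls at which the non-torsion-free objects of the class (e.g.\ sheaves destabilised by their torsion, where $2r-c<0$) change stability, something neither write-up makes explicit. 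Your boundary case $2r-c=0$ is handled soundly: $\mu(Q^{-1})\le2s$ gives $c(Q^{-1})\le s\,r(Q^{-1})$, and $K/Q^{-1}$ is a rank-one subsheaf of $L^2\I_X$, so $c\le s(r-1)+2<2r$ strictly for $r\ge2$ and $s<2$, forcing $r=1$ and $K\cong L^2\I_{X'}$ with $X\subseteq X'$, which indeed never destabilises. The only soft spot is the closing ``alternatively'' remark: \propref{p:stabs}(3) asserts $t$-stability of the rank-one torsion-free sheaves themselves, not the absence of every wall for the class $(1,\ell,1-n)$, so the tensor-by-$L^{-1}$ shortcut as stated proves less than your main argument --- but the main argument stands entirely on its own.
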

 The situation for $n=3$ is different (see
section 5.1 below).

\section{Projectivity of the Moduli Spaces}
If we number the walls $i=0,\ldots, d=\lfloor (n-3)/2\rfloor$ from the
greatest $t$ downwards then we have $\lfloor (n+1)/2\rfloor$ potential
moduli spaces $M_i$, with $M_0=\Hilb^n\T\times\dT$ (and analogously
for $n=5$).
\[\xy\xygraph{*=0{\circ}!{*++!D=0{0}} -_*+!D{\textstyle M_{d+1}} [r]*=0{\bullet}!{*++!D=0{t_d}}
  -_*+!D{\textstyle M_{d}} [r]*=0{\bullet}!{*++!D=0{t_{d-1}}} :@{..} [r]
  *=0{\bullet}!{*++!D=0{t_1}} -_*+!D{\textstyle M_1} [r] *=0{\bullet}!{*++!D=0{t_0}}
  -_*+!D{\textstyle M_0} [r] :@{..>}^(0.9){t} [r] }\endxy\]

\begin{thm}\label{t:main}
For any $t>0$, the moduli space of $t$-stable objects with Chern
character $(1,2\ell,4-n)$ in $\A_0$ is a
smooth complex projective variety for each positive integer $n$.
\end{thm}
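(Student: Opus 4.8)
The plan is to handle each chamber of the ray $\{s=0,\ t>0\}$ separately. By the wall-and-chamber picture established above there are $\lfloor(n+1)/2\rfloor$ moduli spaces $M_0,\dots,M_{d+1}$, and it suffices to show that each is a smooth projective variety. Smoothness is the easy half: every point of $M_i$ represents a $t$-stable, hence simple, object $e$ (\propref{p:genfact}(1)), so $\Hom(e,e)=\C$; since $K_\T\cong\OO_\T$, Serre duality makes the trace map $\Ext^2(e,e)\to H^2(\OO_\T)=\C$ an isomorphism, the trace-free obstruction space $\Ext^2_0(e,e)$ vanishes, and $M_i$ is smooth of dimension $\langle v,v\rangle+2=2n+2$ for $v=(1,2\ell,4-n)$. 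It remains to prove projectivity chamber by chamber.

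The two extreme chambers are immediate. For $t\gg0$ the space $M_0$ consists of the $\mu$-stable torsion-free sheaves $L^2\I_X\P_\xhat$ and equals the Gieseker space $\Hilb^n\T\times\dT$, projective by the usual GIT construction. For the lowest chamber $M_{d+1}$, which at $s=0$ is the region $0<t<t_d$ and hence has $1/t\gg0$, I would apply the Fourier--Mukai transform $\Phi$: it carries $(1,2\ell,4-n)$ to $(n-4,2\ell,-1)$ and $t$-stability to $1/t$-stability, so $\Phi(M_{d+1})\cong\Hat\M^{(n-4,2\ell,-1)}_{1/t}$ with $1/t\gg0$. By \propref{p:genfact}(3) these objects are Gieseker-stable (for $n\geq4$; when $n=4$ one uses Simpson's moduli of pure one-dimensional sheaves), so the target is projective, and as $\Phi$ is an equivalence inducing an isomorphism of the fine moduli schemes, $M_{d+1}$ is projective too. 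The degenerate small cases $n\leq3$ (no interior walls, or the single wall of \S5) reduce directly to the $M_0$ case.

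The intermediate chambers $M_1,\dots,M_d$ are where the real work lies, and here I would bring in the second parameter $s$. By \propref{p:nowalls} the only walls in the strip $0\leq s<2$ are the semicircles that already meet $s=0$, so each $M_i$ is a single connected chamber of that strip; the region trapped between two consecutive nested semicircles runs all the way down to $t=0$ along the $s$-intervals separating the feet of the inner and outer wall. Thus $M_i$ is equally the moduli space of $(s_*,t_*)$-stable objects for some $s_*\in(0,2)$ and arbitrarily small $t_*$. Choosing, as in Huybrechts' theorem, $\R$-polarizations $\beta,\beta'$ for which $\Phi\colon\A_{s_*}\xrightarrow{\sim}\A_{s'}$ is an equivalence of hearts then identifies $M_i$ with a moduli space of $t'$-stable objects of Chern character $(n-4,2\ell,-1)$; the aim is to land these in a chamber where they are \emph{sheaves} and $t'$-stability coincides with Gieseker stability, so that \propref{p:genfact}(3) again yields a projective target.

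The crux, and the step I expect to be the genuine obstacle, is proving that the transformed objects are pure sheaves, so that \propref{p:genfact}(3) may be invoked. By \propref{p:poss} a point of $M_i$ is a torsion-free sheaf, a sheaf with one-dimensional torsion, or (only for $n=5$) a genuine two-term complex, and I would run $\Phi$ through the spectral sequence $\Phi^{p+q}(e)\Leftarrow\Phi^p(E^q)$ on each type, exactly as in the rank-one computation preceding this theorem. The work is to show the hypercohomology is concentrated in one degree, i.e. that $e$ is WIT, by excluding spurious cohomology sheaves: one uses that $\Phi$ respects the tilted hearts together with the $t$-stability of $e$ and the degree and Euler-characteristic constraints (Bogomolov and \propref{p:genfact}(4),(5)) to pin down the support, rank and degree of any would-be torsion contribution and derive a contradiction, just as case (3) was shown to produce a torsion-free sheaf. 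Once purity is established the equivalence $\Phi$ upgrades to an isomorphism of fine moduli schemes, projectivity of the Gieseker target transfers back to $M_i$, and assembling the chambers proves the theorem for every $n$.
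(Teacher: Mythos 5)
Your skeleton is the paper's: $M_0$ by GIT, the lowest chamber by the transform $\Phi$, and the intermediate chambers by ``sliding down the wall'' --- using \propref{p:nowalls} to run each chamber down to $t\to 0$ at a rational $s=q_i$ between the feet of consecutive semicircular walls, then applying a Fourier--Mukai transform to land in a large-volume chamber where \propref{p:genfact}(3) applies. But the transform you specify cannot do this job, and this is a genuine gap. For the standard Mukai transform, Huybrechts' matching of hearts holds only at $\beta=\beta'=0$, with $t'=1/t$; its action on the parameter $z=s+it$ (with the paper's normalization $\ell^2=2$) is $z\mapsto -1/z$, so the boundary point $q_i+i0^+$ with $q_i\in(0,1)$ is sent to $-1/q_i+i0^+$ --- another small-$t'$ boundary point, nowhere near the Gieseker chamber --- and $\Phi(\A_{q_i})$ need not be of the form $\A_{s'}$ at all. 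Your assertion that the target Chern character is always $(n-4,2\ell,-1)$ betrays the problem: that is $\Phi(1,2\ell,4-n)$ for the one fixed transform, which only handles the lowest chamber $M_{d+1}$. The paper instead builds a \emph{chamber-dependent} transform $\Phi_{-q_i}$ whose kernel restricts to sheaves with $\ch(E_\xhat)=(a^2,-ab\ell,b^2)=a^2e^{-q_i\ell}$, where $b/a=q_i$ in lowest terms; it is precisely this adaptation of the kernel to $q_i$ that makes $\Phi_{-q_i}(\A_{q_i})=\A_{r_i}$ and converts $t\ll1$ stability into $t\gg0$ stability (with a target Chern character that varies with $i$). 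Without it the sliding-down trick has no engine.

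Second, the step you yourself flag as the crux --- purity of the transformed objects --- is not argued: ``run the spectral sequence and derive a contradiction as in case (3)'' is a hope rather than a proof, and your plan (verify WIT object-by-object from the $s=0$ classification of \propref{p:poss}) is both unsubstantiated and more than is needed. The paper's resolution is different and cheaper: since $\Phi_{-q_i}$ identifies the moduli functor of $M_i$ with the large-volume functor for a primitive vector, which is represented by the projective (twisted) Gieseker space, it suffices to show that target is \emph{non-empty}, i.e.\ that a single $L^2\I_X$ has torsion-free transform; every other point of $M_i$ then transforms to a sheaf automatically via the functor identification. Even this weaker statement requires a new argument (the three Claims in the paper's proposition): if $K^{-1}=\Phi^{-1}_{-q}(L^2\I_X)\neq0$, its inverse transform is a torsion-free sheaf $G$ injecting into $L^2\I_X$ in $\A_q$ with $\deg G>2q\rk G$, $\chi(G)>q^2\rk G$ and $\deg G<4\rk G$, and such a $G$ would produce a wall inside the wall-free strip $0<s<1$, contradicting \propref{p:nowalls}, unless $\rk G=1$, in which case a generic choice of $X$ forces $\Hom(G,L^2\I_X)=0$. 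Nothing equivalent appears in your sketch. (A minor further point: your deformation-theoretic smoothness presupposes that $M_i$ is representable by a scheme; the paper obtains existence, smoothness and fineness from the generalized Arcara--Bertram Mukai-flop theorem of section~4, although a repaired Fourier--Mukai identification would also deliver representability, as the paper itself does for the lowest chamber and for $n=3$.)
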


The fact that $M_i$ are fine moduli spaces given by smooth varieties
will follow from key results in \cite{Arcara07} 
(generalized a little to cover our case) and we will deal with this in
the next section. We first show that the spaces $M_i$ are
projective. We shall assume in this section that $n\geq4$. The case
$n=3$ will be dealt with as a special case in section 5.1 below.

The trick is to consider the region $t>0$ and $0\leq s<1$ in the set of stability
conditions. Proposition \ref{p:nowalls} tells us that, for a given
$n\geq4$ there are no further walls. The
condition for a wall is given by
\[t^2+\left( s+\frac{n-m-3}{2} \right) ^{2} - \left(\frac{n-m-3}{2}
 \right) ^{2}-(n-2-2\,m)=0\]
corresponding to destabilising sheaves $L\I_{X'}\P_\xhat$ with
$|X'|=m$. The resulting semicircles are illustrated in Figure 1 for
the case $n=10$.
\begin{figure}
\begin{tikzpicture}[scale=1.8]
\draw [help lines] (-0.1,-0.1) grid (2.2,3);
\draw [->] (-0.1,0) -- (2.4,0)  node [anchor=south] {$s$};
\draw [->] (0,-0.1) --  (0,3.3) node [anchor=west] {$t$};
\node[anchor=north] at (1,0) {$1$};
\node[anchor=north] at (0,0) {$0$};
\node[anchor=north] at (2,0) {$2$};
\draw [-] (2,0) -- (2,3.2);
\clip (-0.1,0) rectangle (2.2,3);
\draw (-3.5,0) circle (4.5);
\draw (-3,0) circle (3.873 );
\draw (-2.5,0) circle (3.202);
\draw (-2,0) circle (2.449);
%\draw(-1.5,0) circle (1.5);
\end{tikzpicture}
\caption{Chamber and walls for $n=10$}
\end{figure}
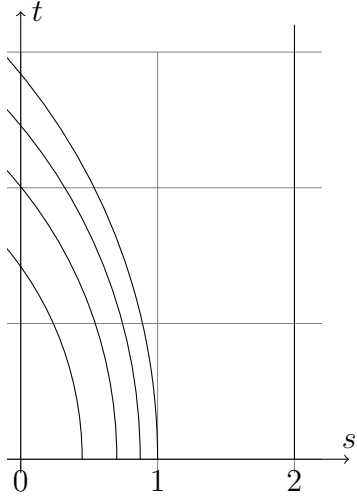
\par
The semicircles intersect the $t=0$ axis in distinct points (as can be
easily checked) and so for each moduli space $M_i$ we can always find
a rational number $s=q_i$ which lies between the $i^{\mathrm{th}}$ and $i+1^{\mathrm{st}}$ wall
on $t=0$. Now let $\Phi_{-q_i}$ be the Fourier-Mukai
transform given by a universal sheaf $\mathbb{E}$ over $\T\times \dT$ whose
restriction $E_\xhat=\mathbb{E}|_{\T\times\{\xhat\}}$ satisfies $\ch(E_\xhat)=(a^2,-ab\ell,b^2)$ where $b/a=q_i$
written in its lowest form. Then $\Phi_{-q_i}(\A_{q_i})=\A_{r_i}$,
where $r_i=c/a$ and $c_1(E_x)=c\ell$, where
$E_x=\mathbb{E}|_{x\times\dT}$ is the restriction to the other factor. Moreover, $e\in
\A_{q_i}$ is $t$-stable for $t\ll 1$ if and only if $\Phi_{-q_i}(e)$ is
$t$-stable for $t\gg0$ in $\A_{r_i}$. Since, our Chern characters
$(1,2\ell,4-n)$ are primitive we know that the moduli space
$M_t^{\Phi_{-q_i}(1,2\ell,4-n)}$ is a fine moduli space of
torsion-free sheaves for $t\gg0$ and is projective provided it is
non-empty. Consequently it will follow that $M_i$ is 
also projective. Since the codimension of the non-torsion-free sheaf
locus in $M_i$ is greater than $n/2-1$ the non-emptyness of
$M_t^{\Phi_{-q_i}(1,2\ell,4-n)}$ will follow from the following.
\begin{prop}
Let $0\leq q< 1$ be a rational number.
If $n>3$ there is some $X\in\Hilb^n\T$ such that
$\Phi_{-q}(L^2\I_X)$ is a torsion-free sheaf in $\A_r=\Phi_{-q}(\A_{q})$.
\end{prop}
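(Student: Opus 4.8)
The plan is to produce the required sheaf as the Fourier--Mukai transform of a \emph{generic} ideal sheaf, and then to upgrade ``stable object'' to ``torsion-free sheaf'' by a slope estimate at large $t$. First I would take $X\in\Hilb^n\T$ general, so that its longest collinear subscheme $X''$ has length $2$. Since $q<1$ the slope of $L^2\I_X$ is $2>2q$, so $L^2\I_X\in\A_q\cap\coh_\T$. For such a generic $X$ the quantity $2|X''|-n-2=2-n$ is negative once $n>3$, so by \propref{p:geom} the only candidate destabilisers (the maps $L\I_{X'}\P_\yhat$ from collinear subschemes) give semicircles of imaginary radius; combined with \propref{p:nowalls}, which confines all walls of the class to those meeting $s=0$, this shows $L^2\I_X$ has no wall in the strip $0\le s<2$. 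Hence $L^2\I_X$ is $t$-stable for every $t>0$ at $s=q$, in particular for $t\ll1$. Feeding this into the equivalence $\Phi_{-q}\colon\A_q\xrightarrow{\sim}\A_r$ and the biconditional recorded immediately before the statement, the object $a:=\Phi_{-q}(L^2\I_X)$ is $t$-stable for $t\gg0$ in $\A_r$.

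The crux is then to show that $a$ is a genuine sheaf, i.e.\ that $A^{-1}:=H^{-1}(a)=0$; granting this, $a\in\A_r\cap\coh_\T$ is $t$-stable for all large $t$, and \propref{p:genfact}(2) forces it to be torsion-free, as required. Suppose $A^{-1}\ne0$. The cohomology sequence $0\to A^{-1}[1]\to a\to A^0\to0$ is exact in $\A_r$ and exhibits $A^{-1}[1]$ as a proper subobject, where $A^{-1}\in F_r$ is torsion-free of rank $r_0\ge1$ with $c_1(A^{-1})=c_0\ell$ and $r_0r-c_0\ge0$. Write $\ch(a)=\Phi_{-q}(1,2\ell,4-n)=(\rho,\gamma\ell,\psi)$. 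Since $a$ is the transform of the finite-slope object $L^2\I_X$ it again has finite, hence (being in $\A_r$) strictly positive, imaginary central charge, so $\gamma-\rho r>0$; assuming $\rho>0$ this gives $\mu_t(a)\to-\infty$ as $t\to\infty$. On the other hand $\ch(A^{-1}[1])=(-r_0,-c_0\ell,-\chi_0)$, so its numerator behaves like $r_0t^2>0$ and we get $\mu_t(A^{-1}[1])\to+\infty$ when $r_0r-c_0>0$, while $\mu_t(A^{-1}[1])=\infty$ when $r_0r-c_0=0$. In either case $A^{-1}[1]$ destabilises $a$ for all large $t$, contradicting the first step. Hence $A^{-1}=0$ and $a=\Phi_{-q}(L^2\I_X)$ is the desired torsion-free sheaf.

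I expect the delicate point to be the positivity $\rho=\rk(a)>0$, which is exactly where the hypothesis $n>3$ (together with the choice of $q$ in an open chamber, so that $L^2\I_X$ genuinely is stable for $t\ll1$) enters: it is the numerical fact underlying the identification, in the paragraph preceding the proposition, of $M_t^{\Phi_{-q}(1,2\ell,4-n)}$ as a moduli space of torsion-free sheaves, and it is what rules out the transform degenerating into a rank-zero torsion sheaf (as happens, for instance, for $n=4$ and $q=0$, where $\Phi_0(1,2\ell,0)=(0,2\ell,-1)$). Thus I would verify $\rho>0$ by an explicit computation of the action of $\Phi_{-q}$ on Chern characters for $q$ in the relevant chamber; once that is in hand, the slope asymptotics above close the argument.
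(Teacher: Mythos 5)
Your argument is correct in substance but takes a genuinely different route from the paper's. The paper fixes $X$ only at the very end: assuming $K^{-1}=H^{-1}(\Phi_{-q}(L^2\I_X))\neq0$, it applies the \emph{inverse} transform, showing $G=\hat\Phi_{-q}(K^{-1}[1])$ is a torsion-free sheaf injecting into $L^2\I_X$ in $\A_q$, extracts numerical bounds on $G$ from the WIT conditions ($\chi(G)>\rk(G)q^2$ and $2q\rk(G)<\deg(G)<4\rk(G)$), and then runs a slope estimate in the strip $0<s<1$ against \propref{p:nowalls} to force $\rk(G)=1$, after which a generic $X$ kills all such homomorphisms. You instead impose genericity from the start, obtain strip-wide stability of $L^2\I_X$ from \propref{p:geom} together with the nested-wall geometry underlying \propref{p:nowalls}, transfer stability through $\Phi_{-q}$, and dispose of $A^{-1}$ by large-$t$ slope asymptotics; this avoids the inverse-transform/WIT bookkeeping altogether and is sound. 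Note that your second numerical input $\gamma-\rho r>0$ is in fact a consequence of $\rho>0$: by the paper's lemma classifying infinite-slope objects of $\A_s$ as extensions of $\OO_X$ by $E[1]$, any object of $\A_r$ with vanishing imaginary charge has non-positive rank.

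The verification you defer is as routine as you predict: from the paper's computation $\ch(E_\xhat\otimes L^2\I_X)=(a^2,a(2a-b)\ell,(4-n)a^2+b^2-4ab)$ one reads off $\rho=(n-4)a^2+b(4a-b)$, positive for all $n>3$ and $0\leq q=b/a<1$ \emph{except} precisely your example $(n,q)=(4,0)$, where the statement itself degenerates: the transform then has class $(0,2\ell,-1)$ and is a pure torsion sheaf (cf.\ \S5.2). This boundary case is also missed by the paper's own assertion that $\chi(E_\xhat\otimes L^2\I_X)<0$ for $n>3$, so flagging it is a point in your favour rather than a defect. One caveat on your reliance on \propref{p:geom}: at $n=5$ your generic $X$ must survive the higher-rank wall as well; the \emph{statement} of \propref{p:geom} grants this, but its written proof only treats rank-one destabilizers, and the paper's own proof of the present proposition sidesteps the issue differently, via the parenthetical in its Claim 3 restricting $s$ beyond where the higher-rank wall meets $t=0$ (at $s=1/3$). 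If you want your route to be self-contained there, you should supply the argument that a generic length-$5$ scheme admits no destabilizing map from a $\mu$-stable bundle of class $(2,\ell,0)$.
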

Before the proof we recall a few facts and definitions about
Fourier-Mukai transforms. We say that an object $e$ is $\Phi_{-q}$-WIT$_i$
if the cohomologies $\Phi^j_{-q}(e)$ in $\coh_{\dT}$ are zero for all $j\neq
i$. If $a\in\A_q$ then
$\Phi_{-q}^1(A^0)=0=\Phi^{-1}_{-q}(A^{-1})$ (since
$\Phi_{-q}(a)\in\A_r$). We denote the inverse transform by $\hat\Phi_{-q}$.
\begin{proof}
Let $X$ be an arbitrary element of $\Hilb^n\T$.
Observe first that $\ch(E_\xhat\otimes
L^2\I_X)=(a^2,a(2a-b)\ell,(4-n)a^2+b^2-4ab)$. So
$\chi(E_{\xhat}\otimes L^2I_X)<0$  for $n>3$ (in fact, this also works
for $n=3$ for a suitable choice of $a$ and $b$ but this case is not
required). Consider the structure sequence, 
\[0\to L^2\I_X\to L^2\buildrel f\over\to\OO_X\to 0.\]
Now $\Phi_{-q}(L^2)[-1]$ is a sheaf of rank $(2-q)^2a^2$.
Note also that $\Phi_{-q}(\OO_X)[-1]$ is a sheaf. Let
$k=\Phi_{-q}(L^2\I_X)$. Our aim is to show that we can find $X$ so
that $k$ is a torsion-free sheaf. Suppose for a contradiction that $K^{-1}\neq0$. 
\par\noindent\textbf{Claim 1.} \textit{$G=\hat\Phi_{-q}(K^{-1}[1])$ is a
torsion-free sheaf and there is a non-trivial map $G\to L^2\I_X$ which
injects in $\A_q$. Moreover, $\deg(G)>2q\rk(G)$ and $\chi(G)>\rk(G)q^2$.}
\par\noindent
This follows be applying $\hat\Phi_{-q}$ to the triangle
$k\to\Phi_{-q}(L^2)\to\Phi_{-q}(\OO_X)$. We first take cohomology
(in $\coh_{\dT}$) and split the sequence via a sheaf $Q$:
\[\xy\xymatrix@=1ex{0\ar[rr]&&
  K^{-1}\ar[rr]&&\Phi_{-q}(L^2)[-1]\ar[rr]\ar[dr]&&
\Phi_{-q}(\OO_X)[-1]\ar[rr]&&K^0\ar[rr]&&0.\\
&&&&&Q\ar[ur]}\endxy\]
This gives the following short exact sequence in $\A_q$:
\[0\to \hat\Phi_{-q}(K^{-1}[1])\to L^2\to\hat\Phi_{-q}(Q[1])\to 0.\]
Then we see that $\hat\Phi_{-q}(K^{-1}[1])$ is a torsion-free sheaf $G$ and $G\in
T_q$. Since $G$ is $\Phi_{-q}$-WIT$_{-1}$ we must have
$\chi(E_{\xhat}\otimes G)>0$. So $a^2\chi(G)>ab\deg(G)-b^2\rk(G)$. But
$G\in T_q$ so $\deg(G)/\rk(G)>2q$. So we have
$a^2\chi(G)>2qab\rk(G)-b^2\rk(G)=b^2\rk(G)$. 
\par\noindent\textbf{Claim 2.} \textit{$G$ also satisfies
  $\deg(G)<4\rk(G)$.}
\par\noindent
We have a triangle (which is short exact in $\A_q$)
\[ G\to L^2\I_X\to \hat\Phi_{-q}(K^0).\]
Then there is a surjection $L^2\I_X\to\hat\Phi_{-q}^0(K^0)$ in
$\coh_\T$. But $\hat\Phi^0_{-q}(K^0)\in T_q$ and is a torsion
sheaf. Let the dimension of its support be $w$. Then
$c_1(G)-(2-w)\ell=c_1(\hat\Phi_{-q}(K^0))=c_1(\hat\Phi_{-q}^{-1}(K^0))$. But
$\hat\Phi_{-q}^{-1}(K^0)\in F_q$. So $\deg(G)-2(2-w)\leq
2q(\rk(G)-1)<2\rk(G)-2$. Thus $\deg(G)<2\rk(G)+2(1-w)\leq 4\rk(G)$ as required.
\par\noindent\textbf{Claim 3.} \textit{Fix $0<s<1$ (for the $n=5$ case
  also assume $s$ is larger than where the higher rank wall cross $t=0$). There is some $X$ such that
$\Hom(G,L^2\I_X)=0$ for all torsion-free sheaves $G\in T_q$ with
$\ch(G)=(r,c\ell,\chi)$ such that $\chi>rs^2$ and $2r>c>rs$. }
\par\noindent
Consider the numerator of $\mu_t(G)-\mu_t(L^2\I_X)$. This is given by
\[\chi(2-s)+s^2(c-2r)+(n-4)(c-rs)-(2r-c)t^2.\]
But 
\[
\chi(2-s)+s^2(c-2r)+(n-4)(c-rs)>rs^2(2-s)+s^2(rs-2r)=0
\]
So such a $G$ must destabilize in $0<s<1$ and this is impossible
unless $\rk(G)=1$. But then we can pick $X$ so that $\Hom(G,L^2\I_X)=0$
for all $G$ as required.

Returning to the proof we see that $K^{-1}$ must be zero as its
transform cannot map non-trivially to $L^2\I_X$.
\end{proof}

\section{The Surgeries}
It remains to show that $M_i$ are smooth varieties which are fine
moduli schemes representing the appropriate moduli functor \ref{e:modfunct}.
The proof proceeds in exactly the same way as \cite[Theorem
5.1]{Arcara07} but the details are a little different. We first state
a generalization of the Arcara Bertram construction. The details of
the proofs are exactly the same as in \cite{Arcara07} and we omit them.

We state the following in generality for a general Bridgeland stability
condition $(\A,Z)$ given by a fixed abelian subcategory $\A\subset D^b(S)$,
where $S$ is any K3 or abelian surface over $\C$.

\begin{thm}[\cite{Arcara07}]
Fix a Mukai vector $v$ and suppose there is a path $p:\R\to U$ in the
stability manifold for which $\A$ remains fixed. Suppose $M$ is some
fine moduli space of Bridgeland stable objects on 
$S$ which is smooth and proper over 
$\C$ and represents the moduli of $p(t)$-stable objects for $t<0$.
Furthermore, suppose that  $M$ contains a sub-moduli space $P$ whose
objects $a$ satisfy the following conditions  
\begin{enumerate}
\item $a$ becomes unstable for $t>0$.
\item $a$  are represented as short exact sequences
\[0\to e_1\to a\to e_2\to 0\qquad\text{in $\A$}\]
for $e_1\in B_1$ and $e_2\in B_2$, where $B_1$ and $B_2$
are fine moduli spaces of such objects.
\item For all $e_1\in B_1$ and $e_2\in B_2$ and non-trivial extensions 
\[0\to e_2\to b\to e_1\to0\qquad\text{in $\A$}\]
 $b$ is $Z$-stable for all $Z\in U$.
\item \[N:=\dim\Ext^1(e_1,e_2)-1>1\]
for all $e_1\in B_1$ and $e_2\in B_2$.
\item All $a'\in M\setminus P$ are $p(t)$-stable for all $t$.
\end{enumerate}
Then
\begin{enumerate}
\item $P$ is a projective bundle over $B_1\times B_2$ with projective
  space fibres of dimension $N$.
\item There is a smooth proper variety $\operatorname{MF}(M,P)$ which is the Mukai
  flop of $M$ along $P$ and which is a fine moduli space of objects
  which are $p(t)$-stable for all $t>0$.
\end{enumerate}
\end{thm}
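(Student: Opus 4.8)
The plan is to realise the surgery across the wall as a genuine Mukai flop by constructing the two projective bundles explicitly and linking them through a single blow-up. First I would prove part (1). Because $B_1$ and $B_2$ are fine moduli spaces there exist universal objects $\mathcal{E}_1$ and $\mathcal{E}_2$ on $S\times B_1$ and $S\times B_2$; pulling these back to $S\times B_1\times B_2$ and forming the relative $\mathcal{E}xt$-sheaf $\mathcal{E}xt^1(\mathcal{E}_2,\mathcal{E}_1)$ gives a locally free sheaf of rank $N+1$ on $B_1\times B_2$. Here the constancy of rank comes from condition (4) and Serre duality, which on a K3 or abelian surface identifies $\Ext^1(e_1,e_2)\cong\Ext^1(e_2,e_1)^{*}$, while the vanishing $\Hom(e_2,e_1)=\Ext^2(e_2,e_1)=0$ (forced by $e_1,e_2$ being distinct stable objects of equal phase on the wall) confines the relative $\mathcal{E}xt$-complex to a single degree. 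The projectivisation $P=\PP(\mathcal{E}xt^1(\mathcal{E}_2,\mathcal{E}_1))$ then carries a universal extension $0\to\mathcal{E}_1\to\mathcal{A}\to\mathcal{E}_2\to0$, and the induced classifying map $P\to M$ is an isomorphism onto the sub-moduli space $P$: surjectivity is condition (2), and injectivity follows because stability forces $e_1,e_2$ to be the $t=0$ Jordan--H\"older factors of $a$, recovering the extension class up to scalar.

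Next I would analyse how $P$ sits inside $M$. At a point $[a]\in P$ the Zariski tangent space is $\Ext^1(a,a)$; applying $\Hom(-,a)$ and $\Hom(a,-)$ to the sequence $0\to e_1\to a\to e_2\to0$ and splicing the resulting long exact sequences writes $\Ext^1(a,a)$ in terms of the four groups $\Ext^1(e_i,e_j)$. The tangent directions to $P$ absorb the deformations of $e_1$, of $e_2$ and of the extension class, so on a fibre $\PP(\Ext^1(e_2,e_1))\cong\PP^N$ the normal bundle $N_{P/M}$ is the bundle built from $\Ext^1(e_1,e_2)\cong\Ext^1(e_2,e_1)^{*}$ through the Euler sequence. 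A dimension count against Mukai's holomorphic symplectic form on $M$ shows that the codimension of $P$ equals $N$, and the symplectic pairing identifies $N_{P/M}$ with the relative cotangent bundle $\Omega_{P/(B_1\times B_2)}$. This is exactly the normal-bundle hypothesis needed for a Mukai flop.

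With the local model in place I would build the flop and match it with the $t>0$ moduli space. Condition (3) ensures that every non-split opposite extension $0\to e_2\to b\to e_1\to0$ is $Z$-stable for all $Z\in U$; these $b$ are classified by $\Ext^1(e_1,e_2)$ and sweep out the dual projective bundle $P'=\PP(\mathcal{E}xt^1(\mathcal{E}_1,\mathcal{E}_2))$. By condition (5) every object of $M\setminus P$ remains $p(t)$-stable across the wall, so the moduli space $M'$ of $p(t)$-stable objects for $t>0$ agrees with $M$ away from $P$ and replaces $P$ by $P'$. Blowing up $P$ in $M$ produces an exceptional divisor $\PP(N_{P/M})=\PP(\Omega_{P/(B_1\times B_2)})$, which is the relative incidence variety inside $P\times_{B_1\times B_2}P'$ and is therefore also the exceptional divisor of the blow-up of $P'$; blowing down the second ruling yields $\operatorname{MF}(M,P)=M'$. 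Smoothness and properness of $M'$ follow because a Mukai flop of a smooth proper symplectic variety is again smooth and proper, and $M'$ is fine because the Mukai vector $v$ is primitive, so there are no strictly $p(t)$-stable objects and the universal extension over $P'$ glues to the universal family on $M\setminus P$.

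The step I expect to be the main obstacle is this normal-bundle computation together with the matching of the two blow-ups: one must verify not merely that $\dim\Ext^1(e_2,e_1)=\dim\Ext^1(e_1,e_2)$, but that $N_{P/M}$ is precisely $\Omega_{P/(B_1\times B_2)}$, so that $\operatorname{Bl}_PM$ carries a second ruling projecting onto $P'$. Controlling the $\Hom$ and $\Ext^2$ correction terms appearing in the long exact sequences, and using the symplectic pairing on $\Ext^1(a,a)$ to see that they pair off, is where the genuine work lies. This is also the point at which the hypothesis $N>1$ in condition (4) enters, since for $N\leq1$ the surgery degenerates --- for $N=1$ the flop is in fact an isomorphism, as in the $n=3$ case discussed in the introduction.
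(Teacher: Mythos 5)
Your proposal is correct and follows essentially the same route as the paper, whose own ``proof'' simply defers to \cite{Arcara07}: the projectivisation $\PP(\mathcal{E}xt^1(\mathcal{E}_2,\mathcal{E}_1))$ with its universal extension, the Serre-duality/Riemann--Roch argument for constant rank (the paper phrases the $\Hom$-vanishing via simplicity of stable extensions, you via equal phase on the wall --- same substance, since $N=-\chi(e_1,e_2)-1$), the identification $N_{P/M}\cong\Omega_{P/(B_1\times B_2)}$ via the symplectic form, the blow-up whose exceptional divisor is the incidence correspondence blown down along the second ruling, and the universal family obtained by modifying the pulled-back family along the exceptional divisor; you even correctly isolate the normal-bundle computation as the crux, and your gluing step is consistent with the paper's one added remark, namely that Lemma 5.4 of \cite{Arcara07} (the normalisation $\mathcal{U}|_{\T\times P}\cong\mathcal{E}_i\otimes L$) is not actually needed. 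One small caveat on your closing aside: for $N=1$ (the $n=3$ case) the paper is emphatic that the wall-crossing birational map does \emph{not} extend to an isomorphism --- the two moduli spaces merely happen to be abstractly isomorphic, via $\Phi\compo R\Delta$ --- so ``the flop is in fact an isomorphism'' is a mischaracterisation, though it lies outside the theorem's hypothesis $N>1$ and does not affect your argument.
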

Note that the assumptions imply that $\Hom(e_1,e_2)=0=\Hom(e_2,e_1)$
because extensions on either side of the wall are stable for some $t$
and so are simple. Hence, for the abelian surface case,
$N=-\chi(e_1,e_2)-1$. 

The proof is exactly as given in \cite{Arcara07} except that Lemma 5.4
in that paper is not required. This is the only place the particular
choices of $\beta$, $B_1$ and $B_2$ mattered. In fact, the lemma is
unlikely to be true in our cases. The lemma is used to show that the
constructed universal sheaf $\mathcal{U}$ on $M_i$ satisfies
\[\mathcal{U}|_{\T\times P}\cong \mathcal{E}_i\otimes L\]
where $\mathcal{E}_i$ is the universal sheaf corresponding to the
(fine) moduli space $P$ and $L$ is some line bundle pulled back from
$P$. It would then allow us to assume $L$ is
trivial by choice of $\mathcal{U}$. But this is not needed for their argument.

This theorem applies to each of our walls because Proposition
\ref{p:stabs} (3) and (5) implies that the rank 1 walls satisfy the
hypotheses of the theorem as $e_1$ takes the form $L\I_X\P_\xhat$ 
and $e_2$ is a pure torsion sheaf with $c_1(e_2)=\ell$. As we have
already observed (and used) $N>1$ for $n\geq4$. The other hypotheses
are met because no two walls intersect near $s=0$. For the unique rank
2 wall (when $n=5$) we have $e_2=L^{-1}\P_\xhat[1]$ and so is
$t$-stable by Proposition \ref{p:stabs}(1). Finally, $e_1$ is a
$\mu$-stable sheaf of Chern character $(2,\ell,0)$. These are
Fourier-Mukai transforms of pure torsion sheaves with
$c_1=\ell$. These are $t$-stable for all $t$ by Proposition
\ref{p:stabs}(5) again and so $e_1$ must also be $t$-stable (for all
$t$ and, in particular, for the values of $t$ near the wall).

This completes the proof of our main Theorem \ref{t:main}.

\begin{section}{Examples of the Moduli Spaces}
Let us now consider the 
low values of $n$ in more detail. 
\subsection{$\mathbf{n=3}$}
In this case, the only possible value of $m$ is zero and a non-trivial
extension $0\to G\to E\to L\P_\xhat\to 0$ has $G$ with degree $1$. The
argument above proves that this is $t$-stable for $t<1$ and
$t$-unstable for $t>1$. On the other hand, $L^2\P_\xhat\I_Y$ is
$t$-stable for all $t>1$ and all $(Y,\xhat)\in\Hilb^3\T\times\dT$. If
$Y$ is not itself collinear then $L^2\P_\xhat\I_Y$ remains $t$-stable
for all $t$. But, if $Y$ is collinear then $L^2\P_\xhat\I_Y$ is
destabilized by some $L\P_\yhat$. So we have one wall $t=1$ and
consequently two moduli spaces $M_{<1}$ and $M_{>1}$. The latter is
just given by the twisted ideal sheaves. The former has a Zariski open
subset corresponding to non-collinear length $3$ $0$-schemes. The
complement of this is a divisor in $M_{<1}$ and consists of sheaves
with torsion subsheaves of the form $G$ above. In particular, $M_{>1}$
and $M_{<1}$ are birationally equivalent. The existence of $M_{<1}$ as
a fine moduli space will be established in the proof of the theorem below.

Note that $\Ext^1(L\P_\xhat,G)$ has dimension $2$ for all $\xhat$ and
such $G$. This is because $\chi(L,G)=-2$ but $\Hom(L\P_\xhat,G)=0$ for
all $\xhat$ and $G$. Indeed, any such map must factor through a
subsheaf with $c_1=\ell$ and $\chi\leq 0$. Then the kernel is
torsion-free with degree $0$ and $\chi\geq 1$, which is
impossible. The moduli space of such $G$ is isomorphic to
$\T\times\dT$ given by $(x,\xhat)\mapsto \OO_{D_{x}}(1)\P_\xhat$. Then
the space of isomorphisms classes of these sheaves $E$ is a $\PP^1$ bundle
over $\T\times\dT\times\dT$. On the other hand, we can also parametrize the
points $L^2\P_\xhat\I_Y$ where $Y$ is collinear by the dual bundle
(corresponding to $\Ext^1(G,L)\cong \Ext^1(L,G)^*$ under Serre
duality). In particular, the birational map given by identifying the
points corresponding to non-collinear length $3$-subschemes does not
extend to an isomorphism of spaces. Nevertheless, the spaces are isomorphic.

\begin{thm}
The moduli spaces $M_{<1}$ and $M_{>1}$ exist and are isomorphic as
smooth projective varieties.
\end{thm}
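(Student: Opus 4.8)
The plan is to prove the statement in two stages: first that $M_{<1}$ exists as a smooth projective variety (so that ``exist'' is justified), and then that it is isomorphic to $M_{>1}=\Hilb^3\T\times\dT$ by a map which, as the discussion above warns, cannot be the wall-crossing birational map. Throughout I treat $M_{>1}$ as the known Gieseker space.

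For the first stage I would argue as follows. Every $t$-stable object is simple by Proposition~\ref{p:genfact}(1), and the Mukai vector $v=(1,2\ell,1)$ is primitive, so the moduli functor for $t<1$ is fine and, by Mukai's unobstructedness for simple objects on an abelian surface, smooth of dimension $\langle v,v\rangle+2=8$ wherever it is represented. The objects occurring were already listed: the torsion-free sheaves $L^2\P_\xhat\I_Y$ with $Y$ non-collinear, together with the nonsplit extensions $0\to G\to E\to L\P_\xhat\to0$. Projectivity I would obtain exactly by the Fourier--Mukai argument used above for $n\geq4$: choosing a rational $q$ just below the wall and the transform $\Phi_{-q}$, one sends $M_{<1}$ to a moduli space of $\mu$-stable torsion-free sheaves, which is projective. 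The one thing to check is that the relevant Euler characteristic is negative, and this does hold at $n=3$ for a suitable choice of $(a,b)$, as already remarked in the course of that argument. Hence $M_{<1}$ is smooth and projective of dimension $8$.

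For the isomorphism I would start from the concrete picture recorded above. The two spaces share the common open set $U$ of non-collinear sheaves $L^2\P_\xhat\I_Y$, and their boundary divisors both lie over $B=\T\times\dT\times\dT$, which parametrises the pairs $(e_1,e_2)=(L\P_\xhat,\OO_{D_x}(1)\P_\yhat)$. With $\mathcal{E}$ the rank-$2$ bundle on $B$ whose fibre is $\Ext^1(e_1,e_2)$, Serre duality and the vanishings $\Hom(e_1,e_2)=\Hom(e_2,e_1)=0$ identify the divisor of $M_{<1}$ with $\PP(\mathcal{E})$ and that of $M_{>1}$ with the dual bundle $\PP(\mathcal{E}^\ast)$. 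The algebraic key is that a rank-$2$ bundle satisfies $\mathcal{E}^\ast\cong\mathcal{E}\otimes(\det\mathcal{E})^{-1}$, so that $\PP(\mathcal{E})\cong\PP(\mathcal{E}^\ast)$ over $B$ and the two divisors are abstractly isomorphic. To globalise this to the ambient eight-folds I would produce an autoequivalence of $D(\T)$ rather than try to extend the identity across the divisor. Composing the shifted Poincar\'e transform $\Phi$ (which realises $t\mapsto1/t$ at $\beta=0$) with the derived dual and a shift gives a transform $\Psi$ which fixes the Mukai vector, since $(1,2\ell,1)\mapsto(1,-2\ell,1)\mapsto(-1,-2\ell,-1)\mapsto(1,2\ell,1)$, and which reverses sub- and quotient-objects; one checks it interchanges the chambers $t<1$ and $t>1$. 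The induced map $M_{<1}\to M_{>1}$ is then a genuine isomorphism, and because $\Psi$ moves torsion-free sheaves nontrivially it acts nontrivially already on $U$, which is exactly why it succeeds where the identity-on-$U$ flop fails.

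The main obstacle is this last globalisation step, namely showing that $\Psi$ carries the whole of $M_{<1}$---including the extension divisor $\PP(\mathcal{E})$, whose members are not sheaves a priori---to honest torsion-free sheaves parametrised by $\Hilb^3\T\times\dT$, and that the resulting map is bijective and regular. This is the same ``transforms are pure sheaves'' difficulty met in the projectivity argument, now to be verified for the extension objects as well as the collinear ideal sheaves; the fibrewise duality $\PP(\mathcal{E})\cong\PP(\mathcal{E}^\ast)$ tells one what the map must do on the divisor, but flatness of the transformed family and purity of its members over the divisor are what make the bijection a morphism. Once regularity is secured, smoothness of both eight-folds promotes the bijective morphism to an isomorphism.
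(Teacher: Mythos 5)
Your proposal stalls at exactly the point where the paper's proof does its real work, and you concede as much in your final paragraph. The paper proves the theorem by computing $\Phi(M_{<1})=M^{(-1,2\ell,-1)}_{>1}$ outright: its Claim 1 characterises the points as two-step complexes $e$ with $E^{-1}\cong L^{-2}\P_\xhat$ and $E^0\cong\OO_Y$ whose class $[e]\in\Ext^2(E^0,E^{-1})$ has maximal rank (verified separately for the transforms of the non-collinear ideal sheaves, via \cite{Mac11}, and of the boundary extensions $0\to G\to E\to L\P_\yhat\to 0$), and its Claim 2 shows the isomorphism class of $e$ is \emph{independent} of the choice of maximal-rank class, because any two injections $H_Y\to\widehat{L^{-2}}$ are related by the action of $\Hom(H_Y,H_Y)$ and hence have isomorphic cokernels. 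This collapsing of the $\PP^1$ of extension classes over the boundary is what produces the universal family $\pi_{12}^*\OO_{\mathcal Y}\otimes\pi_{13}^*\P$ and the identification $\Phi(M_{<1})\cong\Hilb^3\T\times\dT$, delivering existence, smoothness, projectivity and the isomorphism in one stroke. Your fibrewise observation $\PP(\mathcal{E})\cong\PP(\mathcal{E}^*)$ for a rank-$2$ bundle is consistent with this picture but proves nothing global, and your proposed $\Psi$ (Fourier--Mukai composed with derived dual and shift) is precisely the paper's \emph{closing remark}, which the paper justifies only \emph{after} the theorem has been established by the explicit model. Without an analogue of Claim 2 --- purity of the transforms of all boundary objects together with independence of the extension class --- your ``globalisation step'' is a restatement of the problem, not a proof. (There is also a sign slip in your Chern character chain: the shift of $(1,-2\ell,1)$ is $(-1,2\ell,-1)$, not $(-1,-2\ell,-1)$, though the composite does fix $(1,2\ell,1)$.)

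Your existence/projectivity stage has a second, independent gap: the sliding argument of Section 3 rests on Proposition \ref{p:nowalls}, which is stated, and is true, only for $n\geq4$, and it genuinely fails at $n=3$. There is an additional codimension-$0$ wall in $0<s<1$, the semicircle through $s=0$ and $s=1/2$ created by the destabilising objects $\Phi(L^{-2})[1]$ (see Figure 2 and the end of \S 5.1). Sliding from the chamber $\{t<1,\ s=0\}$ down to $t\ll 1$ at a small rational $q$ crosses this wall and computes the wrong moduli space (whose transform is $\Hilb^3\dT\times\T$, parametrising sheaves $L^{-2}\P_x\I_{\tilde Y}$). The argument can be repaired by choosing $q\in(1/2,1)$, but only after establishing the complete wall structure on $0<s<1$; so your assertion that ``the one thing to check is that the relevant Euler characteristic is negative'' is false --- the wall structure is the other, and more serious, thing to check, and it is exactly what the paper sidesteps at $n=3$ by the direct construction. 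Finally, note that Mukai unobstructedness gives smoothness only where the functor is represented: for $n=3$ the Arcara--Bertram theorem of Section 4 does not supply representability, since $N=\dim\Ext^1(L\P_\xhat,G)-1=1$ violates the hypothesis $N>1$, which is precisely why the paper builds $M_{<1}$ by hand via the Fourier--Mukai model rather than by the flop machinery you implicitly lean on.
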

\begin{proof}
We know from Geometric Invariant Theory that $M_{>1}$ exists as a fine
moduli space. To show that $M_{<1}$ exists we apply the Fourier-Mukai
transform. This immediately tells us that $M_{<1}$ is isomorphic to
$M^{(-1,2\ell,-1)}_{>1}$. We can give an explicit model for this space
with a universal object as follows.

\vspace{5pt}
\noindent\textbf{Claim 1.} \textit{The points of $M^{(-1,2\ell,-1)}_{>1}$ are
given by objects $e\in \A_0$ such that $E^{-1}\cong L^{-2}\P_\xhat$
for some $\xhat$ and $E^0\in\Hilb^3\T$ such that
$[e]\in\Ext^2(E^0,E^{-1})$ has maximal rank.}

By this last statement we mean that the composite of $[e]$ with any
non-zero map $\OO_x\to E^0$ from a skyscraper sheaf to
$E^0\cong\OO_{Y}$ is also
non-zero. Now the Mukai spectral sequence (as used at the end of the
last section) gives us a long exact sequence of sheaves 
\[0\to \Phi^{-1}(e)\to H_{Y}\tof{f}
\Phi^1(L^{-2}\P_\xhat)\to\Phi^0(e)\to0,\]
where $H_Y$ is the homogeneous bundle which is the Fourier-Mukai
transform of $\OO_Y$.
Note that $\Phi^1(L^{-2}\P_\xhat)$ is a rank $4$ $\mu$-stable vector
bundle. Then $f$ injects precisely when $[e]$ has maximal rank. Then
maximal rank is precisely the condition for $\Phi(e)$ to be a sheaf.

On the other hand, all of the objects of $M_{<1}$ have transforms which
are described by the claim. For the case when $L^2\P_{\yhat}\I_{Y'}$
has non-collinear $Y'$ is given explicitly in \cite{Mac11} Theorem
7.3. The other points of $M_{<1}$ are given as extensions
\[0\to G\to E\to L\P_\yhat\to0,\]
where $G$ is (the direct image of) a line bundle of degree 1 on some
$D_x$. But such $G$ have $\Phi(G)$ of the same form and so applying
$\Phi^*$ we have the  exact sequence
\[0\to\Phi^{-1}(E)\to\hat L^{-1}\P_{\yhat}\tof{g} \hat G\to \Phi^0(E)\to0.\]
But $g$ cannot surject as the kernel must be locally free. Hence, its
image is a torsion sheaf supported on the support of $\hat G$. This
implies that $\Phi^{-1}(E)\cong \hat L^{-2}\P_\xhat$, for some $\xhat$
and $\Phi^0(E)\in\Hilb^3\dT$. This completes the proof of the claim.

\vspace{5pt}
\noindent\textbf{Claim 2.} \textit{The isomorphism class of $e$ is
 independent of $[e]\in \Ext^2(E^0,E^{-1})$. }

This statement is equivalent to saying that the isomorphism type of a
quotient $\widehat{L^{-2}}/H_Y$ is independent of the (injective) map
$H_Y\to \widehat{L^{-2}}$. But this follows because any two such maps
$g$ and $g'$
are equivalent under the composition action of $\Hom(H_Y,H_Y)$ and so
the two quotients $\coker(g)$ and $\coker(g')$ are isomorphic.

Now we see that $M_{>1}^{(-1,2\ell,-1)}$ is given by
$\Hilb^3\T\times\dT$ with universal sheaf
$\pi_{12}^*\OO_{\mathcal{Y}}\otimes\pi_{13}^*\P$ over
$\T\times\Hilb^3\T\times\dT$. In particular, $M_{<1}$ is isomorphic to
$M_{>1}\cong\Hilb^3\T\times\dT$. 
\end{proof}
In fact, the isomorphism can also be given as $\Phi\compo R\Delta$,
where $R\Delta$ is the derived dual functor $\rhom(-,\OO_\T)[1]$. This is
because $R\Delta:M_{>1}\to M_{>1}^{(-1,2\ell,-1)}$. To see this
observe that $R^{-1}\Delta(L^2\I_Y)\cong L^{-2}$ and
$R^{0}\Delta(L^2\I_Y)\cong\OO_{Y}$ and $\OO_{Y}\to
L^{-2}[2]$ must have maximal rank as taking the dual again gives a map
$L^2\to \OO_Y$ which must surject to have come from $L^2\I_Y$. 

Using the calculation in \cite{Mac11} Theorem 7.3 we can write down
the map $M_{>1}\to M_{<1}$ explicitly at a reduced 0-scheme
$Y=\{p,q,y\}$ as
\[\begin{pmatrix}-1&-1&0&-1\\0&-1&-1&-1\\-1&0&-1&-1\\1&1&1&1\end{pmatrix}.\]
thought of as acting on the ``vector'' $(p,q,y,\xhat)$.
In particular, it is not the extension of the birational map $M_{>1}- \to M_{<1}$.

For completeness observe that we have a fourth moduli space
$M^{(-1,2\ell,-1)}_{<1}$. This is the Fourier-Mukai transform space of
$M_{>1}$ and consists of generic points of the sort described in Claim
1 above but a codimension 1 subvariety consists of 2-step complexes
with cohomology $L^{-1}\P_\yhat$ and $G$ where $G$ is a degree 1 line
bundle supported on some $D_x$.

When $0<s<1$ there is a further wall due to destabilizing objects of
the form $\Phi(L^{-2})[1]$. This corresponds to a ``codimension 0''
surgery. It is an exercise to check that there are no further
destabilising objects for $-1<s<1$ and so the chamber and wall
structure is as illustrated in Figure 2. Once we cross this additional
wall the moduli space consists of objects $e$ of the form
\[H_{\tilde Y}[1]\to e\to \Phi^{1}(L^{-2}\P_\xhat)\]
Then the Fourier-Mukai transform under $\Phi$ of this space is exactly
$\Hilb^3\dT\times\T$ given by sheaves of the form
$L^{-2}\P_x\I_{\tilde Y}$.
\begin{figure}
\begin{tikzpicture}[scale=1.3]
\draw [help lines] (-2.1,-0.1) grid (2.2,2);
\draw [->] (-2.1,0) -- (2.4,0)  node [anchor=south] {$s$};
\draw [->] (0,-0.1) --  (0,2.3) node [anchor=west] {$t$};
\node[anchor=north] at (2,0) {$s=1$};
\node[anchor=north] at (0,0) {$s=0$};
\node[anchor=north] at (-2,0) {$s=-1$};
\clip (-2.2,0) rectangle (2.2,2);
\draw (0,0) circle (2);
\draw (0.5,0) circle (0.5);
\end{tikzpicture}
\caption{Chamber and walls for $n=3$}
\end{figure}
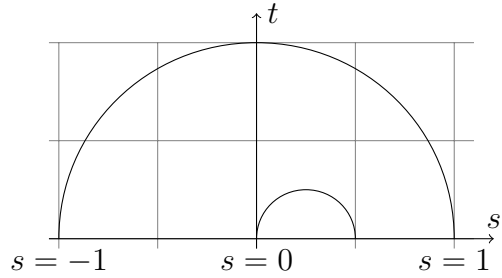

\subsection{$\mathbf{n=4}$}
Again there is only one wall, this time at $t=\sqrt{2}$. 
Just as for the length $3$ case, it is the collinear length $4$
0-schemes which correspond to non $t$-stable sheaves $L^2\I_Z$ as $t$
crosses the wall. These live in a codimension $2$ subvariety and so we
can use the Arcara Bertram argument from \cite{Arcara07} to construct
our moduli space $M_{<\sqrt{2}}$ as a Mukai flop of $M_{>\sqrt{2}}$. This is explained
more fully in the next section.

In this case, the Fourier-Mukai transform gives us an isomorphism
$M_{<\sqrt{2}}\cong M_{>1/\sqrt{2}}^{(0,2\ell,-1)}$ which consists of
pure torsion sheaves of rank $1$ and degree $3$ supported on a translate of
a divisor in the linear 
system $|2\ell|$. In particular, the moduli space $M_{<\sqrt{2}}$ is projective.
The points of $M^{(0,2\ell,-1)}_{>1/\sqrt{2}}$ are harder to describe because this linear
system has singular and reducible elements. For the Chern character
$(0,2\ell,-1)$ there is exactly one wall at $t=1/\sqrt{2}$ and we need
to glue in Fourier-Transforms of $L^2\I_Z$ (and their flat twists) corresponding to
collinear $Z$. These are computed in \cite{Mac11}. The objects are
2-step complexes with cohomology $L^{-1}\P_{-x}$ and $L\P_{-x+\Sigma
  Z}\I_{2x-\Sigma Z}$, where $Z\subset D_x$. 

This should be compared
with the situation in \cite{Arcara07}. The nearest such space (in
their notation) is
$H=2\ell$ and we take $\A_2\cong (-\otimes L)(\A_0)$. The
corresponding Chern character is $(0,2\ell,4)$ rather than
$(0,2\ell,3)$ as in our case.  Of course, $H$
is reducible and so their construction does not apply. But
nevertheless, we obtain analogous data. There is a wall at $t=1/2$ and
glue in 2-step complexes whose $-1$ cohomology is (a twist of)
$L^{-1}$ and whose $0$th cohomology is an extension of (a twist of) $L\I_y$ by
$\OO_z$. 

\subsection{$\mathbf{n=5}$}
\setlength{\tabcolsep}{20pt}
The length $5$ case is special because of the higher rank wall which
intersects $s=0$. There are four moduli spaces corresponding to the 3
walls. The configuration is illustrated in Figure 3.
\begin{figure}[h!]
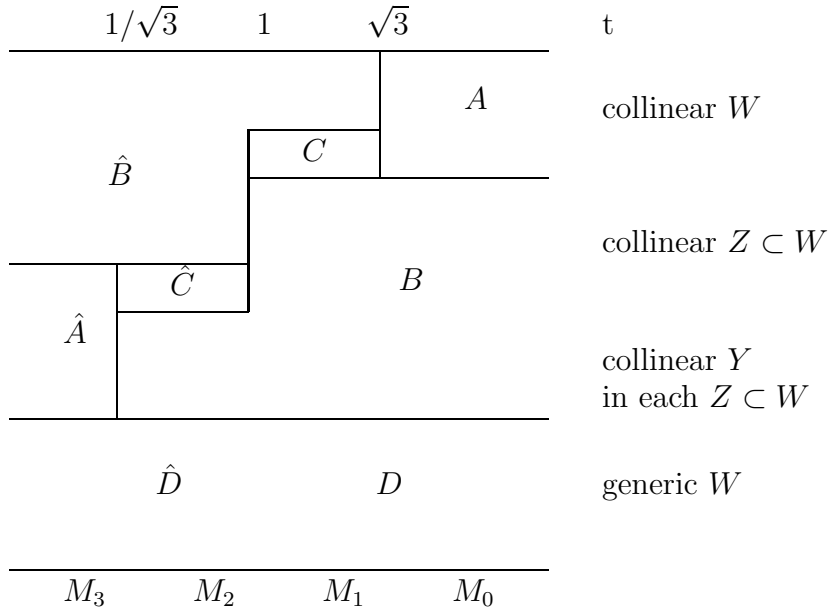

\begin{tabular}{c|c|c|cl}
\multicolumn{4}{c}{$1/\sqrt{3}$\hspace{0.9cm} $1$\hspace{1.1cm} $\sqrt{3}$}\rule{0.6cm}{0pt}\bigstrut[b]&t\\
\cline{1-4}
\multicolumn{3}{c|}{\multirow{3}{2.3cm}[-2ex]{$\hat B$}} &
\rule{0pt}{0.9cm}\multirow{2}{0pt}[2.0ex]{$A$}&{collinear $W$}\\
\cline{3-3}
\multicolumn{2}{c|}{} &$C$\bigstrut\\
\cline{3-4}
\multicolumn{2}{c|}{}&\multicolumn{2}{c}{\multirow{3}{0pt}{$B$}}\rule{0pt}{1cm}&
{collinear $Z\subset W$}\\
\cline{1-2}
\multirow{2}{0pt}[-2ex]{$\hat A$}&$\hat C$\bigstrut&\multicolumn{2}{c}{}\\
\cline{2-2}\\
&\multicolumn{3}{c}{}&{\parbox[b]{3cm}{collinear $Y$\\ in each $Z\subset W$}}\\
\cline{1-4}
\multicolumn{4}{c}{$\hat D$\hspace{1in}$D$}&{generic $W$}\rule[-1cm]{0pt}{2cm}\\
\cline{1-4}
\multicolumn{4}{c}{$M_3$\hspace{1cm} $M_2$\hspace{1cm} $M_1$\hspace{1cm} $M_0$\bigstrut[t]}
\end{tabular}
\caption{Diagram of surgeries for $n=5$}
\end{figure}
 The vertical
lines indicate the walls. The horizontal lines indicate strata in each
moduli space. The letters $A$, $B$, $C$, $D$  indicate sheaves (or 2-step complexes) of a
particular type and their corresponding hatted letters are the
Fourier-Mukai transformed spaces.  To the right of a wall in regions A, B and D
we have torsion-free sheaves characterised by the geometric property
indicated. The codimensions of the spaces are $A$ is codimension $3$
in $M_0$, $B$ is codimension $2$ in $M_0$  and $C$ is codimension 4 in
$M_1$. In particular, $C$ is codimension $1$ in the replacement for
$A$ in $M_1$.

\end{section}

\section*{Acknowledgements}
The authors would like to thank Arend Bayer, Tom Bridgeland, Lothar G\"ottsche, Daniel Huybrechts
and especially Aaron Bertram for useful hints. The ``sliding down the
wall'' trick we use in section 3 is due to Bertram and his co-workers. They would also like to
thank the organisers of the Moduli Spaces Programme and the Isaac
Newton Institute in Cambridge for their hospitality while this work
was carried out. This work forms part of the second author's PhD thesis.

\bibliographystyle{alpha}
\bibliography{p56}

\end{document}